\theoremstyle{plain}
 \newtheorem{thm}{\textbf{Theorem}}[section]
 \newtheorem{prop}{\textbf{Proposition}}[section]
 \newtheorem{lem}{\textbf{Lemma}}[section]
 \newtheorem{cor}{\textbf{Corollary}}[section]
\theoremstyle{definition}
\theoremstyle{remark}
 \newtheorem{rem}{\textbf{Remark}}[section]
 \numberwithin{equation}{section}
\title[Compact Product of Hankel and Toeplitz Operators]{Compact Product of Hankel and Toeplitz\\ Operators on the Hardy space}
\keywords{Hankel operator, Toeplitz operator, Hardy space}
\author[Chu]{ Cheng Chu}
\address{
Department of Mathematics \\ 
Washington University in Saint Louis  \\ 
Saint Louis, Missouri \\
USA}
\email{chengchu@math.wustl.edu}
\subjclass {Primary 47; Secondary 30}
\begin{document}
\bibliographystyle{plain}

\vspace{18mm}
\setcounter{page}{1}
\thispagestyle{empty}

\begin{abstract}
In this paper, we study the product of a Hankel operator and a Toeplitz operator on the Hardy space. We give necessary and sufficient conditions of when such a product $H_f T_g$ is compact.
\end{abstract}

\maketitle

\section{Introduction}  

Let $\DD$ be the open unit disk in the complex plane. Let $L^2$ denote the Lebesgue space of square integrable functions on the unit circle $\partial\DD$. The Hardy space $H^2$ is the subspace of $L^2$ of analytic functions on $\DD$. Let $P$ be the orthogonal projection from $L^2$ to $H^2$. For $f\in L^\infty$, the Toeplitz operator $T_f$ and the Hankel operator $H_f$ with symbol $f$ are defined by $$T_fh=P(fh),$$ and $$H_fh=PU(fh),$$ for $h\in H^2$. Here $U$ is the unitary operator on $L^2$ defined by $$Uh(z)=\bz \tih (z),$$
where $\tif(z)=f(\bz)$.
Clearly, $$H^*_f=H_{f^*},$$ where $f^*(z)= \overline{f(\bz)}.$

Hankel operator can also be defined by $$\cH _fh=(I-P)(fh),$$
and it is easy to verify that $H_f=U\cH_f$.

Let us first look at the compactness of Toeplitz and Hankel operators individually. The only compact Toeplitz operator is the zero operator (see for example \cite{dou72}, \cite{zhu}). For the Hankel operator, we have the following theorem (see for example \cite{pel02}, \cite{zhu}), usually referred to as Hartman's Criterion.
\begin{thm}
Let $f\in L^\infty$. Then the Hankel operator $H_f$ is compact if and only if $$f\in H^\infty+C.$$
Here $C$ denotes the space of continuous functions on the unit circle. $H^\infty+C$ is the linear span of $H^\infty$ and $C$. It is a closed subalgebra of $L^\infty$ containing $H^\infty$ \mbox{(see \cite{sar67})}.
\end{thm}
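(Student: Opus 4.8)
The plan is to prove the two implications separately, and the computational fact underlying both is the identity $H_f T_{z^n}=H_{z^n f}$, which relates multiplication of the symbol by $z^n$ to composition with the $n$-th power of the shift $S:=T_z$. Since $\cH_f T_{z^n}h=(I-P)(z^n f h)=\cH_{z^n f}h$ and $H_f=U\cH_f$ with $U$ unitary, this identity follows immediately from the Fourier-coefficient computation, and it is the bridge between the operator-theoretic hypothesis (compactness) and the function-theoretic conclusion ($f\in H^\infty+C$). I also record the elementary bound $\|H_\varphi\|=\|\cH_\varphi\|=\|(I-P)M_\varphi|_{H^2}\|\le\|\varphi\|_\infty$, where $M_\varphi$ is multiplication by $\varphi$.

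For sufficiency, suppose $f=h+g$ with $h\in H^\infty$ and $g\in C$. Since $hk\in H^2$ whenever $k\in H^2$, we have $\cH_h=0$, hence $H_h=0$ and $H_f=H_g$. To see that $H_g$ is compact I would approximate $g$ uniformly by trigonometric polynomials $p_m$ (Fej\'er's theorem). For a monomial $z^k$ with $k\ge 0$ one has $\cH_{z^k}=0$, while for $k<0$ the operator $\cH_{z^k}$ is easily seen to have finite rank; hence each $H_{p_m}$ has finite rank. The bound above then gives $\|H_g-H_{p_m}\|=\|H_{g-p_m}\|\le\|g-p_m\|_\infty\to 0$, so $H_g$ is a norm limit of finite-rank operators and therefore compact.

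For necessity, assume $H_f$ is compact. First I would use the general fact that if $K$ is compact and $A_n\to 0$ in the strong operator topology with $\sup_n\|A_n\|<\infty$, then $\|A_nK\|\to 0$ (strong convergence is uniform on the totally bounded set $K(\text{ball})$). Applying this with $K=H_f^*$ and $A_n=S^{*n}$, the backward shifts, which tend to $0$ strongly, and passing to adjoints yields $\|H_f S^n\|\to 0$; by the identity above this is exactly $\|H_{z^n f}\|\to 0$. The next step invokes Nehari's theorem in the form $\|H_\varphi\|=\operatorname{dist}_{L^\infty}(\varphi,H^\infty)$, giving $\operatorname{dist}_{L^\infty}(z^n f,H^\infty)\to 0$. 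Choosing $h_n\in H^\infty$ with $\|z^n f-h_n\|_\infty\to 0$ and multiplying by the unimodular factor $\bz^n$ gives $\|f-\bz^n h_n\|_\infty\to 0$. Since $H^\infty+C$ is a closed algebra containing both $\bz\in C$ and $H^\infty$, each product $\bz^n h_n$ lies in $H^\infty+C$, and by closedness $f\in H^\infty+C$.

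I expect the main obstacle to be the appeal to Nehari's theorem, or rather the single inequality $\operatorname{dist}_{L^\infty}(\varphi,H^\infty)\le\|H_\varphi\|$ that it supplies: the reverse inequality is immediate from $H_\varphi=H_{\varphi-h}$ for $h\in H^\infty$, but producing an $H^\infty$ symbol nearly achieving the operator norm is precisely what upgrades the norm estimate $\|H_{z^n f}\|\to 0$ to the $L^\infty$-approximation needed to land inside the closed space $H^\infty+C$. This is typically proved by duality, writing $\operatorname{dist}_{L^\infty}(\varphi,H^\infty)$ as the supremum of $\left|\int \varphi\,\bar{k}\right|$ over the unit ball of $H^1_0$ and recognizing that pairing through $\cH_\varphi$; I would either cite it from the references already at hand or insert this duality argument as a preliminary lemma.
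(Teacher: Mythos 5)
The paper offers no proof of this statement at all: it is Hartman's classical criterion, quoted with pointers to \cite{pel02} and \cite{zhu}, so there is no in-paper argument to compare against. Your proposal is the standard textbook proof from exactly those references, and it is correct. The sufficiency half is complete and elementary as written: $H_h=0$ for $h\in H^\infty$, Fej\'er approximation of $g\in C$ by trigonometric polynomials, finite rank of $\cH_{z^k}$ for $k<0$, and the contraction bound $\|H_\varphi\|\le\|\varphi\|_\infty$. The necessity half is also sound: $S^{*n}\to 0$ strongly, so $\|S^{*n}H_f^*\|\to 0$ by compactness, and taking adjoints gives $\|H_{z^nf}\|=\|H_fT_{z^n}\|\to 0$; note that your identity $H_fT_{z^n}=H_{z^nf}$ also drops out of Proposition \ref{prop}(2) of the paper, since $H_{z^n}=0$. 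You correctly isolate the single nontrivial external input, the inequality $\mathrm{dist}_{L^\infty}(\varphi,H^\infty)\le\|H_\varphi\|$ from Nehari's theorem, and citing it is entirely legitimate here (it appears in both cited books); in effect your argument reproves Sarason's description $H^\infty+C=\overline{\bigcup_n \bar{z}^nH^\infty}$ along the way.

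Two small repairs if you choose to write out the duality rather than cite it. First, the conjugation in your pairing is off: with the bilinear pairing $\langle\varphi,g\rangle=\int\varphi g\,dm$, the preannihilator of $H^\infty$ in $L^1$ is $H^1_0$ itself, so since $H^\infty$ is weak-star closed, $\mathrm{dist}_{L^\infty}(\varphi,H^\infty)=\sup\bigl\{\bigl|\int\varphi g\,dm\bigr| : g\in H^1_0,\ \|g\|_1\le 1\bigr\}$; the functional $\varphi\mapsto\int\varphi\bar{k}\,dm$ with $k\in H^1_0$ does \emph{not} annihilate $H^\infty$. One then uses Riesz factorization $g=G_1G_2$ with $G_1,G_2\in H^2$, $G_1(0)=0$, $\|G_1\|_2\|G_2\|_2=\|g\|_1$, and checks $\int\varphi g\,dm=\langle \cH_\varphi G_2,\overline{G_1}\rangle$ (the $P$-part integrates to zero against $G_1$ because $G_1(0)=0$), giving the desired inequality. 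Second, in the last step you do not actually need that $H^\infty+C$ is an algebra: writing $h_n=p+z^nq$ with $p$ the degree-$(n-1)$ Taylor polynomial of $h_n$ and $q\in H^\infty$ gives $\bar{z}^nh_n=\bar{z}^np+q\in C+H^\infty$ directly, so only the closedness of $H^\infty+C$ from \cite{sar67} is used.
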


The problem of characterizing the compactness for the product of two Hankel operators turns out to be much more difficult. Axler, Chang, Sarason \cite{ax78}, and Volberg \cite{vol82} gave necessary and sufficient conditions that the product of two Hankel operators is compact. They proved the following result:
\begin{thm}\label{ACS}
Let $f,g\in L^\infty$. $H_{\tif}H_g$ is compact if and only if
\beq\label{AC}H^\infty[\barf] \cap H^\infty[g]\subset H^\infty+C.\eeq
Here $H^\infty[f]$ denotes the closed subalgebra of $L^\infty$ generated by $H^\infty$ and $f$.
\end{thm}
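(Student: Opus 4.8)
My plan is first to convert the statement about the Hankel product into one about a Toeplitz semicommutator, and then to attack that through the local theory of Douglas algebras.

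\emph{Step 1: reduction to the semicommutator.} Using $H_f=U\cH_f$, the fact that $U$ is a unitary involution (so $U^*=U$ and $U^2=I$), and $H_f^*=H_{f^*}$, I would first record the elementary identity $(\barf)^*=\tif$, whence $H_{\barf}^*=H_{\tif}$. The standard semicommutator computation gives, for $h\in H^2$, $(T_{fg}-T_fT_g)h=PM_f(I-P)M_gh=(\cH_{\barf})^*\cH_gh$. Writing $\cH_{\barf}=UH_{\barf}$ and $\cH_g=UH_g$ and cancelling $U^*U=I$, this collapses to $H_{\tif}H_g=H_{\barf}^*H_g=T_{fg}-T_fT_g$. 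Thus the theorem is equivalent to the assertion that the semicommutator $T_{fg}-T_fT_g$ is compact if and only if $H^\infty[\barf]\cap H^\infty[g]\subset H^\infty+C$.

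\emph{Step 2: localization over support sets.} The right tool is the Chang--Marshall theory of Douglas algebras together with Sarason's localization of $H^\infty+C$. For each point of $M(H^\infty)$ lying off $M(H^\infty+C)$ one has a representing measure whose closed support $S$ is a \emph{support set}; Sarason's criterion says $u\in H^\infty+C$ exactly when $u|_S$ lies in the weak-$*$ closure $H^\infty|_S$ for every support set $S$. On a fixed $S$ the localized algebra generated by $H^\infty$ and a single symbol obeys a dichotomy: it is either $H^\infty|_S$ or all of $L^\infty(\mu_S)$, according as the symbol restricts to something analytic on $S$ or not. Applying this to $\barf$ and to $g$, I expect to reformulate the abstract inclusion as the concrete condition that \emph{on every support set $S$, either $\barf|_S\in H^\infty|_S$ or $g|_S\in H^\infty|_S$}. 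This is the bridge on which both directions rest.

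\emph{Step 3: connecting compactness to the support-set condition.} For necessity, assuming $T_{fg}-T_fT_g$ compact, I would fix $u\in H^\infty[\barf]\cap H^\infty[g]$ and aim to show $u\in H^\infty+C$, i.e. by Hartman's Criterion that $H_u$ is compact. The idea is to use membership in both generated algebras to sandwich $u$ on each support set between symbols controlled by $\barf$ and by $g$, and to transfer the compactness of the product to that of the relevant single Hankel operator locally; vanishing of the local essential norm on every $S$ then gives $u|_S\in H^\infty|_S$ for all $S$. For sufficiency, assuming the support-set condition, I would establish compactness of $(\cH_{\barf})^*\cH_g$ by showing its essential norm localizes over $M(H^\infty+C)$ and vanishes on each fiber: on a support set where, say, $g|_S$ is analytic, the local factor is negligible, so the local product is. Patching the local estimates through a weak-$*$ compactness argument over the (compact) maximal ideal space yields compactness of the product.

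\emph{Main obstacle.} The hard direction is sufficiency, i.e. passing from local analyticity on every support set to genuine global compactness of the Hankel product. Local triviality of the essential norm does not self-evidently patch to a global statement, because $H^\infty+C$ membership is not detectable pointwise, and controlling the product uniformly is exactly the analytic core supplied by Volberg \cite{vol82} beyond the original argument of Axler, Chang and Sarason \cite{ax78}. I also expect the faithful translation of the inclusion $H^\infty[\barf]\cap H^\infty[g]\subset H^\infty+C$ into the support-set dichotomy of Step~2 to require care, resting on the full strength of the Chang--Marshall theorem.
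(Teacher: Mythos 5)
First, note that the paper itself contains no proof of Theorem \ref{ACS}: it is quoted as known, from Axler--Chang--Sarason \cite{ax78} and Volberg \cite{vol82}, so your attempt must be measured against the machinery the paper assembles around it. Your Step 1 is correct and matches the paper: the identity $H_{\tilde{f}}H_g=T_{fg}-T_fT_g$ is Proposition \ref{prop}(1), and $H_{\bar{f}}^{*}=H_{\tilde{f}}$ follows from $H_f^{*}=H_{f^{*}}$. Your Step 2 aims at the support-set reformulation, which is precisely Theorem \ref{LC}, but your justification is wrong as stated: there is no dichotomy asserting that the algebra generated by $H^\infty|_S$ and a single symbol is either $H^\infty|_S$ or all of $L^\infty$ of the representing measure. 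What is true, and what suffices, is the Gorkin--Zheng lemma (Lemma \ref{1.4}): $m\in M(H^\infty[f])$ if and only if $f|_S\in H^\infty|_S$; combined with Lemma \ref{1.3} and the Chang--Marshall theorem \cite{cha76,mar76}, it yields Theorem \ref{LC} by exactly the argument Section 3 runs for Theorem \ref{M2}. So Step 2 is misargued but repairable with the paper's own lemmas.

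Step 3, however, is a genuine gap, and you half-concede it: both directions are left at the level of intentions. For necessity, ``transfer the compactness of the product to that of the relevant single Hankel operator locally'' is not an argument; the workable route in this paper's framework is Lemma \ref{ML} together with the rank-one identities of Lemma \ref{ID} (in the spirit of Lemma \ref{ML2}), which convert compactness of $H_{\tilde{f}}H_g$ into $\lim_{|z|\to 1^-}\|H_{\bar{f}}k_z\|\cdot\|H_g k_z\|=0$ (using Lemma \ref{*}), after which Lemma \ref{lim} delivers the support-set dichotomy and Theorem \ref{LC} the algebraic inclusion. For sufficiency, ``patching the local estimates through a weak-$*$ compactness argument'' merely restates the goal: local analyticity on every support set does not self-evidently globalize, and closing exactly this is the analytic content of the theorem---Volberg's contribution in \cite{vol82}, or, in the route this paper is organized around, Zheng's criterion (Theorem \ref{EC}, \cite{zh96}) via the distribution-function inequality behind Theorem \ref{gz}. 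Note that Theorem \ref{gz} applies \emph{directly} here, since $T_{fg}-T_fT_g$ is already a finite sum of finite products of Toeplitz operators with Barr\'ia--Halmos symbol zero, so being a compact perturbation of a Toeplitz operator is the same as being compact; one then computes, for $\phi_z\in H^\infty$, that $T^{*}_{\phi_z}H_{\tilde{f}}H_gT_{\phi_z}=H_{\tilde{f}}H_g-(H_{\tilde{f}}k_{\bar{z}})\otimes(H^{*}_g k_{\bar{z}})$ by Lemma \ref{ID}(2), reducing everything to the vanishing of $\|H_{\bar{f}}k_z\|\cdot\|H_g k_z\|$, which Lemma \ref{lim} extracts from the local hypothesis by a net argument over $M(H^\infty+C)$. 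None of this appears in your write-up: what you have is a correct reduction and a correct statement of the target equivalence, with a citation-shaped hole where the proof's engine should be.
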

They also gave a local version of the algebraic condition \eqref{AC} using the notion of support sets. We will define the support sets in Section 2.
\begin{thm}\label{LC}
Let $f,g\in L^\infty$. $$H^\infty[\barf] \cap H^\infty[g]\subset H^\infty+C$$ if and only if for each support set $S$, either $\barf|_S$ or $g|_S$ is in $H^\infty|_S$.
\end{thm}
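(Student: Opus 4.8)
The plan is to convert the global containment into a family of local statements, one for each support set $S$, and to resolve each local statement using the structure of $H^\infty$ restricted to $S$. Write $A=H^\infty[\barf]$ and $B=H^\infty[g]$. The two facts I would import are: first, the localization of $H^\infty+C$, namely that a function $u\in L^\infty$ lies in $H^\infty+C$ if and only if $u|_S\in H^\infty|_S$ for every support set $S$; and second, that if we realize $L^\infty$ as $C(M(L^\infty))$ and $S$ as a closed subset of $M(L^\infty)$, then $u\mapsto u|_S$ is a contractive algebra homomorphism onto $C(S)$ whose image of $H^\infty$, namely $H^\infty|_S$, is a \emph{closed} subalgebra of $C(S)$. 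The second fact has the immediate consequence that $A|_S$ is the closed subalgebra generated by $H^\infty|_S$ and $\barf|_S$, so that $A|_S=H^\infty|_S$ exactly when $\barf|_S\in H^\infty|_S$, and similarly $B|_S=H^\infty|_S$ exactly when $g|_S\in H^\infty|_S$.

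The direction assuming the local condition is then straightforward. Let $u\in A\cap B$ and fix a support set $S$. By hypothesis $\barf|_S\in H^\infty|_S$ or $g|_S\in H^\infty|_S$; in the first case $A|_S=H^\infty|_S$, so $u|_S\in H^\infty|_S$, and in the second case the same conclusion follows from $B|_S=H^\infty|_S$. Hence $u|_S\in H^\infty|_S$ for every $S$, and the localization of $H^\infty+C$ gives $u\in H^\infty+C$.

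For the converse I would argue by contraposition: suppose some support set $S$ satisfies $\barf|_S\notin H^\infty|_S$ and $g|_S\notin H^\infty|_S$, and construct $u\in A\cap B$ with $u\notin H^\infty+C$. By the localization it is enough to arrange $u|_S\notin H^\infty|_S$. This splits into two tasks. The first is a \emph{maximality dichotomy} on $S$: for closed subalgebras $D_1,D_2$ of $C(S)$ containing $H^\infty|_S$, the equation $D_1\cap D_2=H^\infty|_S$ should force $D_1=H^\infty|_S$ or $D_2=H^\infty|_S$; applied to $D_1=A|_S$ and $D_2=B|_S$, and given that neither $\barf|_S$ nor $g|_S$ belongs to $H^\infty|_S$, this yields a restriction $\psi\in A|_S\cap B|_S$ with $\psi\notin H^\infty|_S$. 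The second is a \emph{lifting}: I must promote $\psi$ to a genuine function $u\in A\cap B$ with $u|_S=\psi$, which is exactly the assertion that intersection commutes with restriction over a support set, $(A\cap B)|_S=A|_S\cap B|_S$.

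The heart of the matter, and the step I expect to be the main obstacle, is this converse, specifically the dichotomy and the lifting. Both rest on the fact that $S$ is the support of the representing measure $\mu$ of a point of $M(H^\infty+C)$ and that $H^\infty|_S$ is a logmodular subalgebra of $C(S)$, which makes it behave as a maximal subalgebra --- the support-set analogue of Wermer's theorem that the disk algebra is maximal in $C(\partial\DD)$. I would prove the dichotomy through the abstract F.~and M.~Riesz theorem for $\mu$, which characterizes membership in $H^\infty|_S$ by an annihilation condition and shows that any closed algebra lying strictly between $H^\infty|_S$ and $C(S)$ and arising as a restricted Douglas algebra already carries enough conjugate structure to meet the other one above $H^\infty|_S$; the lifting is handled by the parallel localization of the Douglas algebras $A$ and $B$ themselves over support sets. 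With the dichotomy and lifting in hand, the equivalences $A\cap B\subset H^\infty+C$ iff $A|_S\cap B|_S=H^\infty|_S$ for all $S$ iff $A|_S=H^\infty|_S$ or $B|_S=H^\infty|_S$ for all $S$ iff $\barf|_S\in H^\infty|_S$ or $g|_S\in H^\infty|_S$ for all $S$ complete the argument.
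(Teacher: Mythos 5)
Your forward direction (local condition implies the containment) is essentially sound, granted two standard imports: that $H^\infty|_S$ is closed in $C(S)$ (support sets are weak peak sets for $H^\infty$, so restriction of a Douglas algebra to $S$ is a closed subalgebra of $C(S)$), and the localization of $H^\infty+C$ (a function lies in $H^\infty+C$ iff its restriction to every support set lies in $H^\infty|_S$, itself a consequence of the Chang--Marshall theorem together with Lemma \ref{1.4}). The converse, however, has a genuine gap, and it sits exactly where you flagged it: in the dichotomy and the lifting. Your proposed mechanism for the dichotomy --- that $H^\infty|_S$ behaves as a maximal subalgebra of $C(S)$, a support-set analogue of Wermer's theorem --- is false. $H^\infty$ itself, viewed inside $L^\infty\cong C(M(L^\infty))$, is logmodular yet very far from maximal: the entire lattice of Douglas algebras sits strictly between it and $L^\infty$, and the same phenomenon persists locally --- one can produce strictly increasing chains $H^\infty|_S\subsetneq H^\infty[\bar{b}_1]|_S\subsetneq H^\infty[\bar{b}_1,\bar{b}_2]|_S$ using interpolating Blaschke products whose zeros cluster at $m$. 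Logmodularity and the abstract F.\ and M.\ Riesz theorem give unique representing measures and annihilation criteria for membership, but they do not give maximality, and no Wermer-type argument can close this step. Likewise the lifting $(A\cap B)|_S=A|_S\cap B|_S$ is asserted rather than proved, and it is at least as deep as the theorem itself: localizing $A$ and $B$ separately over $S$ gives no way to glue two functions agreeing on $S$ into a single $u\in A\cap B$ restricting to a prescribed $\psi$.

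Both obstacles evaporate if the converse is run through maximal ideal spaces, which is precisely how the paper proves the analogous Theorem \ref{M2}. Since $A=H^\infty[\bar{f}]$, $B=H^\infty[g]$ and $A\cap B$ are Douglas algebras, the Chang--Marshall theorem gives $A\cap B\subset H^\infty+C$ iff $M(H^\infty+C)\subset M(A\cap B)$; Lemma \ref{1.3} gives $M(A\cap B)=\overline{M(A)\cup M(B)}=M(A)\cup M(B)$; and Lemma \ref{1.4} translates $m\in M(A)$ into $\bar{f}|_S\in H^\infty|_S$ and $m\in M(B)$ into $g|_S\in H^\infty|_S$. So the containment holds iff at every support set one of the two restrictions is in $H^\infty|_S$ --- no dichotomy, no lifting. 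It is worth noting that the dichotomy you wanted is in fact \emph{true} for restricted Douglas algebras: if both restrictions fail to lie in $H^\infty|_S$, then $m\notin M(A)\cup M(B)=M(A\cap B)$, which yields $u\in A\cap B$ with $u|_S\notin H^\infty|_S$. But the only available route to that statement is the maximal-ideal-space argument just described, so your plan has the logical dependencies backwards: the hard local statements you propose as tools are themselves corollaries of the global argument, and the tools you invoke to prove them do not suffice.
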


Later, Zheng in \cite{zh96} gave the following elementary condition that also characterizes the compactness of $H_{\tif}H_g$.
\begin{thm}\label{EC}
Let $f,g\in L^\infty$. $H_{\tif}H_g$ is compact if and only if $$\lim_{|z|\to 1^-}||H_{\barf}k_z||\cdot||H_gk_z||=0.$$
Here $k_z$ denotes the normalized reproducing kernel at $z$.
\end{thm}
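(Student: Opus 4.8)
The plan is to derive Theorem~\ref{EC} from the support-set characterisation in Theorems~\ref{ACS} and~\ref{LC}, matching the symbols directly: the operator in~\ref{EC} is governed by $\barf$ and $g$, exactly the functions appearing in~\ref{LC}. The first step is a distance identity. Writing $H_g=U\cH_g$ with $U$ unitary gives $\|H_gk_z\|=\|\cH_gk_z\|=\|(I-P)(gk_z)\|=\operatorname{dist}_{L^2}(gk_z,H^2)$. Since $k_z(w)=(1-|z|^2)^{1/2}/(1-\bz w)$ is outer with $1/k_z\in H^\infty$, multiplication by $k_z$ is an invertible map of $H^2$ onto itself, so every $F\in H^2$ equals $hk_z$ for some $h\in H^2$; therefore
\beq\label{dist}\|H_gk_z\|=\inf_{h\in H^2}\Big(\int|g-h|^2|k_z|^2\,dm\Big)^{1/2}=\operatorname{dist}_{L^2(|k_z|^2dm)}(g,H^2),\eeq
where $m$ is normalized Lebesgue measure on $\partial\DD$. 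Thus $\|H_gk_z\|$ is the weighted $L^2$-distance of $g$ to $H^2$ against the harmonic measure $|k_z|^2\,dm$, and $\int\phi|k_z|^2\,dm=\widetilde\phi(z)$ for $\phi\in L^\infty$.

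Next I would pass to the boundary. As $|z|\to1^-$ the states $\phi\mapsto\int\phi|k_z|^2\,dm$ on $L^\infty$ have weak-$*$ cluster points; along a subnet $z_\alpha$ one obtains a measure $\mu$ that is a representing measure for a point $m$ of $M(H^\infty+C)$ in the fibre over $\partial\DD$, whose closed support is a support set $S$, and every support set arises in this way (the Hoffman theory to be recalled in Section~2). The heart of the proof is the dictionary: whenever $|k_{z_\alpha}|^2dm\to\mu$ with support set $S$,
\beq\label{dict} g|_S\in H^\infty|_S \quad\Longleftrightarrow\quad \|H_gk_{z_\alpha}\|\to0.\eeq
The forward implication is immediate from~\eqref{dist}: if $g=h$ on $S$ with $h\in H^\infty$, then $\int|g-h|^2\,d\mu=0$, and testing the weak-$*$ convergence against the fixed bounded function $|g-h|^2$ gives $\int|g-h|^2|k_{z_\alpha}|^2\,dm\to0$, so $\|H_gk_{z_\alpha}\|\to0$.

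Granting~\eqref{dict}, both directions follow. For necessity, suppose $H_{\tif}H_g$ is compact; Theorems~\ref{ACS} and~\ref{LC} give that on every support set one of $\barf|_S,g|_S$ lies in $H^\infty|_S$. Given any $z_n\to\partial\DD$, pass to a subnet with $|k_{z_n}|^2dm\to\mu$; by~\eqref{dict} one of $\|H_{\barf}k_{z_n}\|,\|H_gk_{z_n}\|$ tends to $0$, hence so does the product, and as every subnet admits such a sub-subnet the full boundary limit is $0$. For sufficiency, suppose $\|H_{\barf}k_z\|\,\|H_gk_z\|\to0$; for a given support set $S$ choose $z_\alpha\to m$ with $|k_{z_\alpha}|^2dm\to\mu$, and along a subnet the smaller factor tends to $0$, say $\|H_gk_{z_\alpha}\|\to0$, whence $g|_S\in H^\infty|_S$ by~\eqref{dict}. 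The condition of Theorem~\ref{LC} therefore holds, and Theorem~\ref{ACS} yields compactness of $H_{\tif}H_g$.

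The main obstacle is the reverse half of the dictionary~\eqref{dict}, deducing $g|_S\in H^\infty|_S$ from $\|H_gk_{z_\alpha}\|\to0$. By~\eqref{dist} this gives approximants $h_\alpha\in H^2$ with $\int|g-h_\alpha|^2|k_{z_\alpha}|^2dm\to0$, but the $h_\alpha$ need not be uniformly bounded, so a naive weak-$*$ limit is unavailable. The resolution is to pass to $L^2(\mu)$ and identify the $L^2(\mu)$-closure of $H^\infty$ with $\{\psi:\psi|_S\in H^\infty|_S\}$, using that $\mu$ is a representing (indeed Jensen) measure for a point of $M(H^\infty+C)$ and the attendant weak-$*$ Dirichlet algebra structure; the delicate point is the lower semicontinuity of the weighted distance under weak-$*$ convergence of $|k_{z_\alpha}|^2dm$, which is precisely where the support-set machinery behind Theorems~\ref{ACS}--\ref{LC} is essential.
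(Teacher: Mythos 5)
Your overall strategy---deriving Theorem \ref{EC} from Theorems \ref{ACS} and \ref{LC} via the pointwise dictionary between ``$g|_S\in H^\infty|_S$'' and ``$\|H_gk_{z_\alpha}\|\to 0$''---is sound, and everything up to the dictionary is correct: the identity $\|H_gk_z\|=\operatorname{dist}_{L^2(|k_z|^2dm)}(g,H^2)$ (using that $U$ is unitary and that $k_z$ is invertible in $H^\infty$), the convergence of the Poisson states $|k_{z_\alpha}|^2\,dm$ to the unique representing measure $\mu_m$ (uniqueness coming from logmodularity of $H^\infty$ on $M(L^\infty)$), the easy half of the dictionary, and the net/subnet bookkeeping in both directions. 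The genuine gap is the one you flag yourself: the implication $\|H_gk_{z_\alpha}\|\to 0\ \Rightarrow\ g|_S\in H^\infty|_S$ is never proved. Your proposed resolution is circular: ``identify the $L^2(\mu)$-closure of $H^\infty$ with $\{\psi:\psi|_S\in H^\infty|_S\}$'' is a restatement of the assertion to be proved, not a step toward it. Moreover, the ``lower semicontinuity of the weighted distance'' you invoke is false as a general principle: weak-$*$ convergence of $|k_{z_\alpha}|^2dm$ to $\mu$ gives no control of $\int|g-h_\alpha|^2\,d\mu$, because the near-minimizers $h_\alpha\in H^2$ vary with $\alpha$ and need not be uniformly bounded---exactly the obstruction you yourself identified one sentence earlier. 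Any actual proof must use analyticity to replace the $h_\alpha$ by controlled (bounded) approximants, and that is real work, not an abstract semicontinuity statement.

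The missing implication is precisely Lemma \ref{lim} of this paper (the step from (2) to (1)), quoted from \cite{gor99}*{Lemma 2.5, 2.6}; if you permit yourself that citation, your argument closes and yields a complete proof of the equivalence of the conditions in Theorems \ref{ACS}, \ref{LC} and \ref{EC}. Be aware, though, that the paper itself does not prove Theorem \ref{EC}: it is quoted from \cite{zh96}, where the sufficiency is obtained by a genuinely different and harder-analysis route (distribution function inequalities, the same machinery behind Theorem \ref{gz}), rather than by reduction to the Axler--Chang--Sarason--Volberg theorem. So your route is a legitimate alternative---indeed it is the relation between the three theorems that the paper alludes to in Sections 3 and 4---but as written it rests on an unproved lemma whose proof is the entire difficulty, so the proposal is incomplete rather than merely different. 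A small notational point: your use of $\widetilde\phi(z)$ for the harmonic extension collides with the paper's convention $\tilde f(z)=f(\bar z)$ and should be changed.
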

The relations between these three conditions in Theorem \ref{ACS}, \ref{LC} and \ref{EC} can be found in Section 3 and 4. Inspired by the above theorems, we consider the product of a Hankel operator and a Toeplitz operator in this paper. The following theorem is our main result:
\begin{thm}\label{M}
Let $f,g\in L^\infty$. The product $K=H_fT_g$ of the Hankel operator $H_f$ and the Toeplitz operator $T_g$ is compact if and only if for each support set $S$, one of the following holds:
\begin{enumerate}
\item $f|_S\in H^\infty|_S.$\\
\item $g|_S\in H^\infty|_S$ and $(fg)|_S\in H^\infty|_S.$
\end{enumerate}
\end{thm}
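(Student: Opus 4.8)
The plan is to reduce the compactness of $K = H_f T_g$ to a statement about Hankel operators alone, where the machinery of Theorems \ref{ACS}--\ref{EC} applies, and then to translate the resulting algebraic condition into the local support-set condition stated here. My first step is to find a clean operator-theoretic identity relating $H_f T_g$ to products of Hankel operators. Using $T_g = P M_g$ on $H^2$ and the standard decomposition $M_g = T_g + H$-type pieces, I would compute $H_f T_g$ and look for a relation of the form $H_f T_g = H_{fg} - H_f H$-something, exploiting the operator identities $H_f T_g = H_{fg} - H_f \cH_g^*$-style manipulations (concretely, the fact that $M_g = P M_g P + \cdots$ gives $fgh = g\,(Ph)\,f + \ldots$). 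The cleanest route is likely the identity $H_{fg} = H_f T_g + T_{\tilde f} H_g$ (or its $U$-conjugated analogue), so that compactness of $H_f T_g$ becomes equivalent, modulo compact operators, to a relation between $H_{fg}$ and $T_{\tilde f} H_g$. I expect this algebraic reduction to be the linchpin of the whole argument.

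Once such an identity is in hand, I would use the elementary Zheng-type criterion of Theorem \ref{EC} as the main working tool, since reproducing-kernel estimates localize naturally. Testing $K$ on the normalized kernels $k_z$ and letting $|z| \to 1^-$ converts compactness into the vanishing condition $\lim_{|z|\to 1^-}\|H_f^* k_z\|\,\|T_g k_z\| = 0$ together with correction terms coming from the identity above; a short computation gives $\|T_g k_z\|^2 = \|g k_z\|^2 - \|H_g k_z\|^2$ and $T_g^* k_z = \overline{g(z)} k_z + (\text{small})$, so the Toeplitz factor is governed by how far $g$ is from being analytic near the boundary point. The goal is to show that the product condition forces, at each boundary fiber, either $H_f$ to be locally small (case (1), $f|_S \in H^\infty|_S$) or else $T_g$ to behave analytically while $fg$ also stays analytic (case (2)).

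The decisive step is the passage between the global vanishing/algebraic condition and the local support-set dichotomy, and here I would lean on Theorem \ref{LC} and the structure of the fibers of the maximal ideal space over $H^\infty + C$. Support sets $S$ are the supports of the representing measures for points in $M(H^\infty+C)$, and the key fact is that $\phi|_S \in H^\infty|_S$ is equivalent to the local vanishing of $\|H_\phi k_z\|$ along the net approaching the corresponding fiber point (this is the bridge implicit in comparing Theorems \ref{LC} and \ref{EC}). Applying this with $\phi = f$, $\phi = g$, and $\phi = fg$ should produce exactly the three quantities $f|_S$, $g|_S$, $(fg)|_S$ appearing in the theorem. I anticipate the necessity direction to be the more delicate one: I must rule out the \emph{mixed} degenerate scenario in which neither $f$ nor $g$ is individually analytic on $S$ yet their product $H_f T_g$ still vanishes locally. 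Resolving this requires carefully analyzing what constraints $(fg)|_S \in H^\infty|_S$ together with $g|_S \in H^\infty|_S$ impose, and showing these two analyticity conditions on the $g$-side genuinely suffice to kill the product even when $f|_S \notin H^\infty|_S$. Establishing that case (2) is both necessary and sufficient in this mixed regime is where I expect the main obstacle to lie, and it is the point at which the asymmetry between the Hankel factor and the Toeplitz factor (the latter being one-sided in a way $H$ is not) must be exploited.
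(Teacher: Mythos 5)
There is a genuine gap, on both sides of the equivalence. First, the kernel criterion you propose to work with, $\lim_{|z|\to 1^-}\|H_f^*k_z\|\,\|T_gk_z\|=0$, is not even necessary for compactness: take $b$ an interpolating Blaschke product, $f=\overline{b}$, $g=b$. Then $H_fT_g=H_{\overline{b}b}-T_{\tilde{f}}H_b=0$ is compact, yet along the zeros $z_n$ of $b$ one has $\|H_{\overline{b}}k_{z_n}\|^2=1-|b(z_n)|^2=1$ and $\|T_bk_{z_n}\|=\|bk_{z_n}\|=1$, so the product stays at $1$. The Toeplitz factor can only enter through Hankel quantities: in the paper, conjugating the compact $K$ by $T_{\tilde{\phi_z}}\cdot T_{\overline{\phi_z}}$ (Lemma \ref{ML} combined with the exact identity of Lemma \ref{ML2}) produces the correction term $(H_fk_z)\otimes(T_{\phi_z}H_g^*k_{\overline{z}})$, and the resulting necessary condition is $\|H_fk_z\|\,\|H_gk_z\|\to 0$, with $\|H_{fg}k_z\|$ then controlled through $H_{fg}=H_fT_g+T_{\tilde{f}}H_g$ together with $\|Kk_z\|\to 0$. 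You did correctly single out that identity as the linchpin, and your treatment of the mixed case in the necessity direction would go through once $\|T_gk_z\|$ is replaced by $\|H_gk_z\|$; so necessity is repairable. Note also that it is the \emph{soft} direction: it needs only the elementary Lemma \ref{ML} for compact operators, not the delicate analysis you anticipate.

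The serious gap is sufficiency, for which your proposal contains no mechanism at all. Vanishing of $\|Kk_z\|$, or of any quantity tested on reproducing kernels, does not imply compactness of a general bounded operator, and Theorem \ref{EC} is a theorem about products of \emph{two Hankel operators}; its proof rests on distribution function inequalities that you cannot simply cite for $H_fT_g$, which need not be a finite sum of finite products of Toeplitz operators. The paper's device is to pass to $K^*K=T_{\overline{g}}\bigl(T_{\overline{f}f}-T_{\overline{f}}T_f\bigr)T_g$, which \emph{is} such a finite sum, so that the Guo--Zheng Theorem \ref{gz} applies; then the Barr\'ia--Halmos symbol homomorphism shows $K^*K$ has symbol zero, upgrading ``compact perturbation of a Toeplitz operator'' to ``compact'' (Corollary \ref{cor}). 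Sufficiency is then checked by the exact commutation identity $KT_{\phi_z}=T_{\tilde{\phi_z}}K-(H_fk_z)\otimes(H_g^*k_{\overline{z}})$ of Lemma \ref{ML3}, with Lemmas \ref{lim}, \ref{*} and \ref{*lim} killing the rank-one term $F_z$ and $K^*k_{\overline{z}}$ under either local condition; in particular case (2) is handled via $(H_fT_g)^*k_{\overline{z}}=H^*_{fg}k_{\overline{z}}-H^*_gT_{f^*}k_{\overline{z}}$, not by any asymmetry argument about $T_g$. Without the $K^*K$ trick and the symbol-map step, the direction you yourself flag as the main obstacle remains unproven in your plan.
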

Analogously to Theorem \ref{ACS}, we also obtain the following algebraic version of Theorem \ref{M}:
\begin{thm}\label{M2}
$$H^\infty[f] \cap H^\infty[g, fg]\subset H^\infty+C$$ if and only if for each support set $S$, one of the following holds:
\begin{enumerate}
\item $f|_S\in H^\infty|_S.$\\
\item $g|_S\in H^\infty|_S$ and $(fg)|_S\in H^\infty|_S.$
\end{enumerate}
\end{thm}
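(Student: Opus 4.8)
The plan is to recast the right-hand condition in terms of the restriction algebras and then follow the scheme of the Axler--Chang--Sarason localization behind Theorem \ref{LC}. Write $A=H^\infty[f]$ and $B=H^\infty[g,fg]$; both are closed subalgebras of $L^\infty$ containing $H^\infty$. For a support set $S$ the restriction map $\rho_S\colon u\mapsto u|_S$ is a contractive algebra homomorphism and $H^\infty|_S$ is a closed subalgebra, so $A|_S=H^\infty|_S$ exactly when the generator $f|_S$ lies in $H^\infty|_S$, while $B|_S=H^\infty|_S$ exactly when both $g|_S$ and $(fg)|_S$ lie in $H^\infty|_S$. Thus condition (1) is precisely $A|_S=H^\infty|_S$ and condition (2) is precisely $B|_S=H^\infty|_S$, and the whole right-hand side reads: for every support set $S$, $A|_S=H^\infty|_S$ or $B|_S=H^\infty|_S$. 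The theorem then becomes the purely algebraic statement that $A\cap B\subset H^\infty+C$ iff this support-set dichotomy holds, which is the exact analogue of Theorem \ref{LC} with $A,B$ in place of $H^\infty[\bar f],H^\infty[g]$.

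For the easy direction I would use the localization of $H^\infty+C$: a function $h\in L^\infty$ lies in $H^\infty+C$ if and only if $h|_S\in H^\infty|_S$ for every support set $S$. Assuming the dichotomy, take $h\in A\cap B$ and fix $S$. If $A|_S=H^\infty|_S$, then $h\in A$ gives $h|_S\in A|_S=H^\infty|_S$; if instead $B|_S=H^\infty|_S$, then $h\in B$ gives $h|_S\in H^\infty|_S$. Either way $h|_S\in H^\infty|_S$ for all $S$, so $h\in H^\infty+C$; hence $A\cap B\subset H^\infty+C$.

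For the converse I would argue by contraposition. Suppose some support set $S_0$ satisfies neither (1) nor (2), i.e. $A|_{S_0}\supsetneq H^\infty|_{S_0}$ and $B|_{S_0}\supsetneq H^\infty|_{S_0}$. The goal is to produce $h\in A\cap B$ with $h|_{S_0}\notin H^\infty|_{S_0}$; by the localization above such an $h$ cannot lie in $H^\infty+C$, contradicting the containment. Here I would invoke the structure theory of a support set: with respect to its representing measure $\mu_{S_0}$, $H^\infty|_{S_0}$ is a weak-$*$ Dirichlet algebra, and both $A|_{S_0}$ and $B|_{S_0}$ strictly enlarge it. From this one manufactures an element common to the two restriction algebras that escapes $H^\infty|_{S_0}$, and then lifts it to a genuine global function in $A\cap B$.

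I expect the converse to be the main obstacle, for two linked reasons. First, producing a common element of $A|_{S_0}\cap B|_{S_0}$ lying outside $H^\infty|_{S_0}$ from the mere fact that each factor is strictly larger; and second, the lifting step, i.e. passing from the local inequality $A|_{S_0}\cap B|_{S_0}\ne H^\infty|_{S_0}$ to an actual element of the global intersection $A\cap B$ whose restriction to $S_0$ is nontrivial. This second point requires controlling $(A\cap B)|_S$ against $A|_S\cap B|_S$ and is exactly where the Axler--Chang--Sarason support-set machinery (representing measures together with the Chang--Marshall localization of Douglas-type algebras) must be used. The only place where the two-generator, product form of $B=H^\infty[g,fg]$ enters is in identifying $B|_{S_0}=H^\infty|_{S_0}$ with condition (2); the analytic heart of the argument is otherwise identical to that of Theorem \ref{LC}.
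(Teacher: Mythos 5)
Your sufficiency half (the dichotomy implies the inclusion) is correct, and it is really the paper's own argument in dual form: the localization fact you invoke, that $h\in H^\infty+C$ if and only if $h|_S\in H^\infty|_S$ for every support set $S$, is equivalent via Lemma \ref{1.4} to the Chang--Marshall step the paper uses ($M(H^\infty+C)\subset M(A)$ forces $A\subset H^\infty+C$). Your identification of condition (1) with $H^\infty[f]\,|_S=H^\infty|_S$ and condition (2) with $H^\infty[g,fg]\,|_S=H^\infty|_S$ is also fine, granted the standard fact that $H^\infty|_S$ is closed, so that restricting the closed algebra generated by $H^\infty$ and the given symbols lands in the algebra generated by their restrictions.

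The genuine gap is the necessity half, and you have located it accurately but not filled it; as sketched it would fail at both of the points you flag. First, from $A|_{S_0}\supsetneq H^\infty|_{S_0}$ and $B|_{S_0}\supsetneq H^\infty|_{S_0}$ you cannot conclude that $A|_{S_0}\cap B|_{S_0}\neq H^\infty|_{S_0}$: two subalgebras, each strictly larger than $H^\infty|_{S_0}$, can perfectly well intersect exactly in $H^\infty|_{S_0}$, and ruling this out is precisely the nontrivial content of results of the type of Theorem \ref{LC}, not a consequence of the weak-$*$ Dirichlet structure alone. Second, even granted a local witness, there is no lifting mechanism from $A|_{S_0}\cap B|_{S_0}$ back to the global intersection $A\cap B$; controlling $(A\cap B)|_S$ against $A|_S\cap B|_S$ is exactly what your sketch leaves open. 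The paper sidesteps both problems by never constructing a function at all: it works dually with maximal ideal spaces. The missing ingredient is Lemma \ref{1.3} (from \cite{gor99}), which gives $M(A\cap B)=\overline{M(A)\cup M(B)}=M(A)\cup M(B)$, the closure being redundant for a union of two compact sets. Then $A\cap B\subset H^\infty+C$ immediately yields $M(H^\infty+C)\subset M(A)\cup M(B)$, and Lemma \ref{1.4} translates $m\in M(H^\infty[f])$ into condition (1) and $m\in M(H^\infty[g,fg])$ into condition (2) on the support set of $m$. That maximal-ideal-space identity for intersections of Douglas algebras is the one idea your plan needs; without it, or an equivalent substitute, the contrapositive argument stalls exactly where you predicted it would.
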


In Section \ref{sec}, we will give a generalization of Theorem \ref{M} to the sum of two products of Hankel and Toeplitz operators.

\section{Preliminaries}
We begin this section by establishing the relation between Toeplitz operators and Hankel operators. Consider the multiplication operator $M_f$ on $L^2$ for $f\in L^\infty$, defined by $M_fh=fh$. $M_f$ can be expressed as an operator matrix with respect to the decomposition $L^2=H^2\oplus (H^2)^\perp$ as the following:
$$
M_f=
\begin{pmatrix}
T_f & H_{\tilde{f}}U\\
UH_f & UT_{\tilde{f}}U
\end{pmatrix}
$$
For $f,g\in L^\infty$, $M_{fg}=M_fM_g$, so multiplying the matrices and comparing the entries, we get:
\begin{prop}\label{prop}
Let $f$ and $g$ be in $L^\infty$. Then
\begin{enumerate}
\item $T_{fg}=T_fT_g+H_{\tif} H_g$.
\item $H_{fg}=H_fT_g+T_{\tif} H_g$.
\item If $g\in H^\infty$, then $H_fT_g=T_{\tif}H_g$.
\end{enumerate}
\end{prop}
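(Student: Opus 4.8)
The plan is to read all three identities off the block-matrix form of $M_f$ printed just before the statement, together with the single algebraic input $M_{fg}=M_fM_g$. First I would record the two structural facts about $U$ that the computation needs: that $U$ is unitary \emph{and} an involution, $U^2=I$ (equivalently $U=U^{-1}$), which one checks directly from $Uh(z)=\bar z\,\tilde h(z)$ on the circle; and that the off-diagonal entries really are $H_{\tilde f}U$ and $UH_f$, which follow from $H_f=U\mathcal{H}_f$ together with the conjugation rule $UM_aU=M_{\tilde a}$. With the matrix in hand I would simply form the product $M_fM_g$ by block multiplication and match it entry-by-entry against the matrix of $M_{fg}$.

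For part (1) I would compare the $(1,1)$ entries. Block multiplication gives $T_fT_g+(H_{\tilde f}U)(UH_g)$, and since $U^2=I$ the off-diagonal contribution collapses to $H_{\tilde f}H_g$; equating with the $(1,1)$ entry $T_{fg}$ of $M_{fg}$ yields $T_{fg}=T_fT_g+H_{\tilde f}H_g$. For part (2) I would compare the $(2,1)$ entries. Block multiplication and $U^2=I$ give $UH_fT_g+UT_{\tilde f}H_g=U(H_fT_g+T_{\tilde f}H_g)$, while the $(2,1)$ entry of $M_{fg}$ is $UH_{fg}$; because $U$ is invertible I may cancel it on the left to get $H_{fg}=H_fT_g+T_{\tilde f}H_g$.

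For part (3) the extra ingredient is that an analytic symbol annihilates the Hankel operator: if $g\in H^\infty$ then $gH^2\subseteq H^2$, so $\mathcal{H}_g=(I-P)M_g|_{H^2}=0$ and hence $H_g=U\mathcal{H}_g=0$. Substituting $H_g=0$ into part (2) already gives $H_fT_g=H_{fg}$. To pass the analytic symbol across the Hankel operator I would also exploit that multiplication operators commute, $M_fM_g=M_gM_f=M_{fg}$, and compare the $(2,1)$ entries of $M_gM_f$; this is just part (2) with the roles of $f$ and $g$ exchanged, namely $H_{fg}=H_gT_f+T_{\tilde g}H_f$, which for analytic $g$ reduces to $H_{fg}=T_{\tilde g}H_f$. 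Chaining the two expressions for $H_{fg}$ then identifies the products, $H_fT_g=H_{fg}=T_{\tilde g}H_f$.

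I expect no deep obstacle: the work is entirely bookkeeping. The one place to be careful is the precise form of the off-diagonal entries of $M_f$ and the consistent use of $U^2=I$, since a stray $U$ or a misplaced tilde is exactly what would spoil the cancellations producing (1) and (2). Verifying those entries once at the outset---by testing the action of $M_f$ on the monomial bases $\{z^k\}_{k\ge 0}$ of $H^2$ and $\{\bar z^k\}_{k\ge 1}$ of $(H^2)^\perp$, and confirming $UM_aU=M_{\tilde a}$---is the step I would carry out most carefully, after which the three identities fall out mechanically.
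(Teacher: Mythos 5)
Your proposal is correct and is precisely the paper's approach: the paper proves the proposition by the very same block-matrix computation ($M_{fg}=M_fM_g$ against the displayed $2\times2$ matrix of $M_f$), stating only ``multiplying the matrices and comparing the entries,'' and you have supplied the routine verifications it leaves implicit ($U^2=I$, $UM_aU=M_{\tilde a}$, the identification of the off-diagonal entries, and $H_g=0$ for analytic $g$). One remark: in part (3) you prove $H_fT_g=T_{\tilde g}H_f$, whereas the statement as printed reads $H_fT_g=T_{\tilde f}H_g$; your version is the correct one, since for $g\in H^\infty$ one has $H_g=0$, so the printed identity would force $H_fT_g=0$ --- this is a typo in the statement, as confirmed by the paper's later use of the identity in exactly your form, namely $T_{\tilde{\phi_z}}H_g=H_gT_{\phi_z}$ for $\phi_z\in H^\infty$ in the proof of Lemma \ref{ML3}.
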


Let $x,y\in L^2$. Define $x\otimes y$ to be the following rank one operator on $L^2$:
$$
(x\otimes y)(f)=\langle f,y\rangle x.
$$
\begin{prop}
Let $x,y\in L^2$ and let $S,T$ be operators on $L^2$. Then
\begin{enumerate}
\item $(x\otimes y)^*=y\otimes x$.
\item $||x\otimes y||=||x||\cdot||y||$.
\item $S(x\otimes y)T=(Sx)\otimes(T^* y)$.
\end{enumerate}
\end{prop}

For each $z\in \DD$, let $k_z$ denote the normalized reproducing kernel at z: $$k_z(w)=\frac{\sqrt{1-|z|^2}}{1-\bz w},$$
and $\phi_z$ be the M\"obius transform: $$\phi_z(w)=\frac{z-w}{1-\bz w}.$$
We have the following identities:
\begin{lem}\label{ID}
For $z\in \DD$,
\begin{enumerate}
\item $T_{\phi_z}T_{\bar{\phi_z}}=1-k_{z}\otimes k_{z}.$\\
\item $T^*_{\tilde{\phi_z}}T_{\tilde{\phi_z}}=1-k_{\bz}\otimes k_{\bz}.$\\
\item $H_{\bar{\phi_z}}=-k_{\bz}\otimes k_z.$
\end{enumerate}
\end{lem}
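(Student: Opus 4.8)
The three identities are tightly linked, and the plan is to prove (3) by a direct computation and then deduce (1) and (2) from it via Proposition \ref{prop}(1). The organizing observation is that $\phi_z$ is a single Blaschke factor, hence unimodular on $\partial\DD$; the same holds for $\tilde{\phi_z}$, since $\tilde{\phi_z}(w)=\phi_z(\bar w)$ and $|\bar w|=1$ whenever $|w|=1$. This unimodularity is exactly what collapses the relevant Toeplitz products to the identity.

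For (3), I would first rewrite $\bar{\phi_z}$ on the circle. Since $|\phi_z|=1$ there, $\bar{\phi_z}=1/\phi_z=\frac{1-\bar z w}{z-w}$, and a partial-fraction step yields the clean splitting
\[
\bar{\phi_z}(w)=\bar z+\frac{1-|z|^2}{z-w}.
\]
The constant $\bar z$ contributes nothing to $\cH_{\bar{\phi_z}}=(I-P)(\bar{\phi_z}\,\cdot\,)$. For the second term I would use that $\frac{1}{z-w}$ expands in strictly negative frequencies, so it lies in $(H^2)^\perp$, while for $h\in H^2$ the difference quotient $\frac{h(w)-h(z)}{z-w}$ is again in $H^2$; subtracting, $(I-P)\big(\tfrac{h}{z-w}\big)=\frac{h(z)}{z-w}$. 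Hence $\cH_{\bar{\phi_z}}h=(1-|z|^2)h(z)\frac{1}{z-w}$. Applying $U$ and using $\bar w=1/w$ on the circle turns this into $H_{\bar{\phi_z}}h(w)=-(1-|z|^2)h(z)\frac{1}{1-zw}$. Recognizing $\sqrt{1-|z|^2}\,h(z)=\langle h,k_z\rangle$ and $\frac{\sqrt{1-|z|^2}}{1-zw}=k_{\bar z}(w)$ then identifies this with $-\langle h,k_z\rangle k_{\bar z}=-(k_{\bar z}\otimes k_z)h$, which is (3).

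With (3) in hand I would record $H_{\tilde{\phi_z}}=-k_z\otimes k_{\bar z}$. This follows because $(\bar{\phi_z})^*=\tilde{\phi_z}$ (directly from $f^*(w)=\overline{f(\bar w)}$ and $\tilde f(w)=f(\bar w)$), so by $H_f^*=H_{f^*}$ we get $H_{\tilde{\phi_z}}=H_{\bar{\phi_z}}^*=-(k_{\bar z}\otimes k_z)^*=-k_z\otimes k_{\bar z}$ using Proposition 2.2(1). For (1), I apply Proposition \ref{prop}(1) to $f=\phi_z$, $g=\bar{\phi_z}$: since $\phi_z\bar{\phi_z}=1$ on the circle, $I=T_{\phi_z}T_{\bar{\phi_z}}+H_{\tilde{\phi_z}}H_{\bar{\phi_z}}$, and the elementary composition rule $(a\otimes b)(c\otimes d)=\langle c,b\rangle(a\otimes d)$ together with $\|k_{\bar z}\|=1$ gives $H_{\tilde{\phi_z}}H_{\bar{\phi_z}}=k_z\otimes k_z$, yielding (1). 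For (2), I write $T^*_{\tilde{\phi_z}}=T_{\overline{\tilde{\phi_z}}}$ and apply Proposition \ref{prop}(1) once more; the product symbol is $|\tilde{\phi_z}|^2=1$ on the circle, while the accompanying Hankel term is $H_{\bar{\phi_z}}H_{\tilde{\phi_z}}=k_{\bar z}\otimes k_{\bar z}$ by the same composition rule with $\|k_z\|=1$, giving (2).

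The routine part is the rank-one composition together with the adjoint/tilde bookkeeping. The genuinely delicate step is the direct computation in (3): correctly separating $\bar{\phi_z}h$ into its $H^2$ and $(H^2)^\perp$ components, and then tracking the effect of the antiunitary twist $U$, in particular the substitution $\bar w=1/w$ that is valid only on the circle. Once (3) is pinned down, (1) and (2) are short consequences.
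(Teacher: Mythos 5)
Your proof is correct; note, however, that the paper itself does not prove Lemma \ref{ID} at all --- it states the identities and refers to \cite{guo03}, \cite{xia00} and \cite{zh96} --- so there is no in-paper argument to match, and your derivation serves as a sound self-contained substitute. The core computation checks out: on the circle $\overline{\phi_z(w)}=\bar z+\frac{1-|z|^2}{z-w}$, the function $\frac{1}{z-w}=-\sum_{n\geq 0}z^n w^{-n-1}$ lies in $(H^2)^\perp$, and since $U$ acts by $(Ug)(w)=\bar w g(\bar w)$ with $\bar w=1/w$ on the circle, one indeed gets $H_{\overline{\phi_z}}h=-\langle h,k_z\rangle k_{\bar z}$, which is item (3). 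Taking adjoints via $H_f^*=H_{f^*}$ and $(\overline{\phi_z})^*=\tilde{\phi_z}$ gives $H_{\tilde{\phi_z}}=-k_z\otimes k_{\bar z}$ (incidentally the form the paper actually uses later, in the proof of Lemma \ref{ML3}), and feeding the unimodularity of $\phi_z$ and $\tilde{\phi_z}$ into Proposition \ref{prop}(1) yields (1) and (2); your rank-one compositions with $\|k_{\bar z}\|=\|k_z\|=1$ and the tilde/star bookkeeping $\widetilde{\overline{\tilde{\phi_z}}}=(\tilde{\phi_z})^*=\overline{\phi_z}$ are all accurate. Two steps deserve one explicit line each: the fact that the difference quotient $\frac{h(w)-h(z)}{w-z}$ belongs to $H^2$ for every $h\in H^2$ (coefficientwise it equals $\sum_{k\geq 0}\bigl(\sum_{j\geq 0}a_{k+j+1}z^j\bigr)w^k$, with norm at most $\|h\|/(1-|z|)$, so the splitting used to compute $(I-P)(\overline{\phi_z}h)$ is valid on all of $H^2$ and not merely on a dense subspace), and the identity $T_{\tilde{\phi_z}}^*=T_{\overline{\tilde{\phi_z}}}$, which the paper never records but which follows immediately from $P^*=P$. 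Compared with the cited sources, where (1) and (2) are typically verified by direct kernel computations, your order of deduction --- (3) first, then (1) and (2) as corollaries of the product formula $T_{fg}=T_fT_g+H_{\tilde f}H_g$ --- is arguably the cleaner route within this paper, since it reuses Proposition \ref{prop} rather than introducing further kernel manipulations.
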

These identities can be found in ~\cite{guo03},~\cite{xia00} and~\cite{zh96}.

The next lemma in \cite{cgiz05} and \cite{guo03} gives a relation between $H_f$ and $H^*_f$.
\begin{lem}\label{*}
Let $f\in L^\infty$, $g \in H^2$. Then $H^*_f g^*=(H_f g)^*$ and thus $||H^*_f g^*||=||H_f g||$. In particular, $||H^*_f k_{\bz}||=||H_f k_z||$.
\end{lem}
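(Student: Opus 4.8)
The plan is to reduce the identity $H_f^* g^* = (H_f g)^*$ to a few elementary commutation relations for the conjugation map $h \mapsto h^*$, where $h^*(z) = \overline{h(\bar z)}$. The cleanest way to see how $*$ behaves is through Fourier coefficients: if $h$ has expansion $\sum_n a_n z^n$ on $\partial\DD$, then $h^*$ has expansion $\sum_n \overline{a_n}\, z^n$, so $*$ is the antilinear map that conjugates each Fourier coefficient while leaving its index unchanged. Two consequences are immediate from this description: $*$ is an isometry of $L^2$ (it preserves $\sum_n |a_n|^2$), and it is an involution carrying $H^2$ into $H^2$ (nonnegative indices stay nonnegative), so each expression below is well defined for $g \in H^2$.

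First I would record the three commutation facts I need. (i) $*$ is multiplicative, $(fg)^* = f^* g^*$, which is immediate from $\overline{(fg)(\bar z)} = \overline{f(\bar z)}\,\overline{g(\bar z)}$. (ii) $*$ commutes with $U$, that is $(Uh)^* = U(h^*)$; this is a one-line check either directly from $Uh(z) = \bar z\, h(\bar z)$ or from the Fourier description. (iii) $*$ commutes with the projection $P$, that is $(Ph)^* = P(h^*)$; this is the only step with any content, and it follows instantly from the Fourier picture, since $P$ simply discards the coefficients of negative index while $*$ does not move indices. With these in hand, and using the already-established identity $H_f^* = H_{f^*}$, I would chain the definitions: $H_f^* g^* = H_{f^*} g^* = PU(f^* g^*) = PU\big((fg)^*\big) = P\big((U(fg))^*\big) = \big(PU(fg)\big)^* = (H_f g)^*$.

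The norm statement then follows because $*$ preserves the $L^2$ norm, giving $\|H_f^* g^*\| = \|(H_f g)^*\| = \|H_f g\|$. For the final assertion I would specialize to $g = k_z$ and compute $k_z^*(w) = \overline{k_z(\bar w)} = k_{\bar z}(w)$ directly from the formula for $k_z$; substituting yields $H_f^* k_{\bar z} = (H_f k_z)^*$ and hence $\|H_f^* k_{\bar z}\| = \|H_f k_z\|$. I do not expect a genuine obstacle here: the whole argument is bookkeeping of commutation relations, and the only point that deserves care is (iii), the commutation of $*$ with $P$, which becomes transparent once everything is phrased in terms of Fourier coefficients.
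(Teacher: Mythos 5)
Your proof is correct and follows essentially the same route as the paper: the identical chain $H_f^* g^* = H_{f^*} g^* = PU(f^*g^*) = P\big((U(fg))^*\big) = (PU(fg))^* = (H_f g)^*$, resting on the commutation of $*$ with $U$ and $P$ and the isometry of $*$. Your Fourier-coefficient justification of those commutation facts and the explicit verification that $k_z^* = k_{\bar z}$ merely spell out details the paper leaves implicit.
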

\begin{proof}
Notice that for all $g\in L^2$, $(Ug)^*=Ug^*$ and $Pg^*=(Pg)^*$. Thus
$$H^*_f g^*=H_{f^*}g^*=PU(f^*g^*)=P(Ufg)^*=(PUfg)^*=(H_fg)^*$$
Since $||h||=||h^*||$ for all $h\in L^2$, we get $||H^*_f g^*||=||H_f g||$.
\end{proof}

To state the local conditions, we need some notation for the maximal ideal space. For a uniform algebra $B$, let $M(B)$ denotes the maximal ideal space of $B$, the space of nonzero multiplicative linear functionals of $B$. Given the weak-star topology of $B^*$, which is called the Gelfand topology, $M(B)$ is a compact Hausdorff space.

We identify $\DD$ in the usual way as a subset of $M(H^\infty)$ (see for example \cite{gar81}). By Carleson's Corona Theorem \cite{car62}, $\DD$ is dense in $M(H^\infty)$. Moreover, $M(H^\infty+C)=M(H^\infty)\backslash \DD$ (see\cite{sar67}).

For any $m$ in $M(H^\infty)$, there exists a representing measure $\mu_m$ such that $m(f)=\int fd\mu_m$, for all $f\in H^\infty$ (see \cite{gar81}*{Chapter V}). A subset $S$ of $M(H^\infty)$ is called a support set if it is the support of a representing measure for a functional in $M(H^\infty+C)$.

\section{Proof of Theorem \ref{M2}}
In this section we prove Theorem \ref{M2}. The proof we present here is analogous to the proof of \cite{gor99}*{Lemma 1.1}.

The proof is based on the following two lemmas:
\begin{lem}\cite{gor99}*{Lemma 1.3}\label{1.3}
Let ${A_\Ga}$ be a family of Douglas algebras. Then $$M(\cap A_{\Ga})=\overline{\cup M(A_{\Ga})}.$$
\end{lem}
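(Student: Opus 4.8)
The plan is to realize every maximal ideal space appearing in the statement as a closed subset of $M(H^\infty)$, and then prove the two inclusions separately; the first is soft and the second carries essentially all of the content.

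First I would record the standard structural facts about Douglas algebras (cited from Garnett). For any Douglas algebra $B$ the restriction map $M(B)\to M(H^\infty)$, $\phi\mapsto\phi|_{H^\infty}$, is a homeomorphism onto a compact subset: multiplicativity forces $\phi(\bar b)=1/\phi(b)$ for every inner $b$ invertible in $B$, so $\phi$ is determined on the generators of $B$ by its values on $H^\infty$. I therefore identify each $M(B)$ with a closed subset of $M(H^\infty)$, the identification being inclusion-reversing: $A\subseteq B$ forces $M(B)\subseteq M(A)$. With this in hand the inclusion $\overline{\cup M(A_\Ga)}\subseteq M(\cap A_\Ga)$ is immediate, since $\cap A_\Ga$ is again a Douglas algebra contained in each $A_\Ga$, whence $M(A_\Ga)\subseteq M(\cap A_\Ga)$ for every $\Ga$; as $M(\cap A_\Ga)$ is compact, hence closed in $M(H^\infty)$, it contains the closure of the union.

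For the reverse inclusion I would begin from the observation that drives everything: an inner function $b$ satisfies $\bar b\in\cap A_\Ga$ if and only if $\bar b\in A_\Ga$ for every $\Ga$, so the inner functions invertible in $\cap A_\Ga$ are exactly those invertible in every $A_\Ga$. By the Chang--Marshall theorem $\cap A_\Ga$ is the closed algebra generated by $H^\infty$ together with the conjugates of these common invertible inner functions, and one has the level-set description, valid off $\DD$, that $M(B)=\{x\in M(H^\infty+C):|\hat b(x)|=1\text{ for every inner }b\text{ with }\bar b\in B\}$. The reverse inclusion then becomes a purely topological assertion about unimodular level sets: a point $x$ at which every inner function invertible in all the $A_\Ga$ is unimodular must lie in the closure of the union of the smaller level sets $M(A_\Ga)$.

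I expect this last step to be the main obstacle. The natural strategy is to argue contrapositively: given $x\notin K:=\overline{\cup M(A_\Ga)}$, separate $x$ from $K$ by a basic Gelfand neighborhood cut out by finitely many functions of $H^\infty$, and then manufacture a \emph{single} inner function invertible in every $A_\Ga$ with $|\hat b(x)|<1$, contradicting $x\in M(\cap A_\Ga)$. The delicacy is that each $A_\Ga$ supplies its own separating inner function, and these must be combined into one function lying in the intersection of all the invertibility sets; equivalently, one must show that $\overline{\cup M(A_\Ga)}$ is itself the spectrum of a Douglas algebra and that this algebra is $\cap A_\Ga$ rather than something strictly smaller. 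I would carry this out by localizing at the support set of the representing measure of $x$ and approximating the relevant unimodular data by interpolating Blaschke products, which is precisely where the argument uses the special structure of Douglas algebras rather than that of arbitrary uniform algebras. As a consistency check on the direction of the correspondence, I note the clean dual identity $M\bigl(\bigvee_\Ga A_\Ga\bigr)=\cap_\Ga M(A_\Ga)$ for the algebra generated by the family, which follows at once from the same level-set description and orients the order-reversing bijection the right way.
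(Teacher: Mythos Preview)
The paper does not prove this lemma at all: it is quoted verbatim from \cite{gor99}*{Lemma 1.3} and used as a black box in the proof of Theorem~\ref{M2}. So there is no in-paper argument to compare your proposal against.

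On the substance of your sketch: the first inclusion and the structural reductions are correct. Identifying each $M(A_\Ga)$ with a closed subset of $M(H^\infty)$, the inclusion $\overline{\cup M(A_\Ga)}\subseteq M(\cap A_\Ga)$ is indeed automatic, and your reformulation of the reverse inclusion via Chang--Marshall and the unimodular level-set description of $M(B)$ is the standard and correct framework. Your observation that an inner function is invertible in $\cap A_\Ga$ exactly when it is invertible in every $A_\Ga$, hence when $|\hat b|\equiv 1$ on $\overline{\cup M(A_\Ga)}$, is the right pivot.

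Where your proposal stops short is precisely the point you flag yourself: given $x\notin K=\overline{\cup M(A_\Ga)}$, you need to produce a \emph{single} inner function $b$ with $|\hat b|\equiv 1$ on $K$ but $|\hat b(x)|<1$. Saying this ``is equivalent to showing that $K$ is itself the spectrum of a Douglas algebra and that this algebra is $\cap A_\Ga$'' is a restatement of the goal, not a step toward it, and the final sentence about localizing at the support set and approximating by interpolating Blaschke products is too vague to count as a proof. This separation step is genuinely the content of the lemma; the argument in \cite{gor99} (to which the paper defers) supplies it. Your outline is correctly oriented but incomplete at exactly the nontrivial point.
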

\begin{lem}\cite{gor99}*{Lemma 1.5}\label{1.4}
Let $m\in M(H^\infty+C)$ and let $S$ be the support set of m. Then $m\in M(H^\infty[f])$ if and only if $f|_S\in H^\infty|_S$.
\end{lem}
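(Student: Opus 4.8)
The plan is to prove the two implications separately, using the representing measure $\mu=\mu_m$ carried by the support set $S$ together with the weak-$*$ Dirichlet structure of $H^\infty|_S$ inside $L^\infty(\mu)$.

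For the easy direction, assume $f|_S\in H^\infty|_S$, so that there is $h\in H^\infty$ with $f=h$ on $S$. I would extend $m$ to $H^\infty[f]$ by integration against $\mu$: set $\Phi(u)=\int u\,d\mu$ for $u\in H^\infty[f]$. Since $H^\infty[f]$ is the closed algebra generated by $H^\infty$ and $f$, and $\Phi$ is bounded, it suffices to check multiplicativity on the dense set of $H^\infty$-coefficient polynomials in $f$, and by bilinearity and continuity on the monomials $f^n u$ with $u\in H^\infty$. Because $\mu$ is supported on $S$ and $f=h$ there, each integrand $f^n u$ may be replaced by $h^n u\in H^\infty$ without changing its $\mu$-integral, and then $\int h^n u\,d\mu=m(h^n u)=m(h)^n m(u)=\Phi(f)^n\Phi(u)$ by multiplicativity of $m$ on $H^\infty$. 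Hence $\Phi$ is a nonzero multiplicative functional on $H^\infty[f]$ restricting to $m$, i.e. $m\in M(H^\infty[f])$. If $f|_S$ only lies in the uniform closure of $H^\infty|_S$, I would instead note that $H^\infty[f]|_S=H^\infty|_S$ and that $\Phi$ factors through restriction to $S$, where it agrees with $m$ and is therefore multiplicative.

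For the converse, assume $m\in M(H^\infty[f])$ and let $\tilde m$ be the multiplicative extension. Choose a representing measure $\nu$ for $\tilde m$ on the Shilov boundary $M(L^\infty)$; its restriction to $H^\infty$ represents $m$, so, using that the support set of a point of $M(H^\infty+C)$ depends only on its Gleason part, $\nu$ is carried by $S$ and may be taken to be $\mu$. Multiplicativity of $\tilde m$ then gives, with $c=\tilde m(f)$, that $\int(f-c)u\,d\mu=\tilde m(fu)-c\,\tilde m(u)=0$ for every $u\in H^\infty$, so that $f-c$ is orthogonal to $\overline{H^\infty|_S}$ in $L^2(\mu)$. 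Invoking that $H^\infty|_S$ is a weak-$*$ Dirichlet algebra in $L^\infty(\mu)$, with the attendant decomposition $L^2(\mu)=H^2(\mu)\oplus\overline{H^2_0(\mu)}$, this orthogonality forces $f|_S\in H^2(\mu)$. Since $f$ is bounded and the bounded elements of $H^2(\mu)$ are exactly $H^\infty(\mu)$, we obtain $f|_S\in H^\infty(\mu)$; finally, because $\operatorname{supp}\mu=S$, a bounded $\mu$-analytic function that is the restriction of a continuous function on $S$ lies in the uniform restriction algebra $H^\infty|_S$, which is the asserted conclusion.

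The main obstacle is the converse direction, and within it the structural facts about support sets that must be marshalled: that $H^\infty|_S$ is a weak-$*$ Dirichlet algebra in $L^\infty(\mu)$ with its Hardy-space splitting, that $H^2(\mu)\cap L^\infty(\mu)=H^\infty(\mu)$, and that on a support set the weak-$*$ closure $H^\infty(\mu)$ meets $C(S)$ precisely in $H^\infty|_S$. A secondary subtlety is the choice of representing measure: I must ensure that the measure representing the extension $\tilde m$ is carried by $S$, which rests on the support set being an invariant of $m$ (equivalently of its Gleason part) rather than of a particular representing measure. Once these ingredients are in hand, both implications are short, and the genuine content is the organization of the Hardy-space theory over $(S,\mu)$.
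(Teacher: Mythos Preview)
The paper does not prove this lemma; it is quoted from \cite{gor99}*{Lemma 1.5} and used as a black box in the proof of Theorem~\ref{M2}, so there is no in-paper argument to compare against.

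Your outline follows the standard route and is essentially correct, with two points that deserve care. In the forward direction you implicitly use that $H^\infty|_S$ is already closed for a support set $S$, so that $f|_S\in H^\infty|_S$ genuinely gives an $h\in H^\infty$ with $f=h$ on $S$; once that is granted, your extension $\Phi$ is multiplicative exactly as you wrote. In the converse, the substantive inputs are precisely the ones you flag as obstacles: that any representing measure for the extension $\tilde m$ is carried by $S$ (it also represents $m$ on $H^\infty$, and the support set is an invariant of $m$), and that $H^\infty|_S$ is weak-$*$ closed in $L^\infty(\mu)$, so $H^\infty(\mu)=H^\infty|_S$. Your final sentence appealing to ``continuity of $f|_S$'' is not quite the right justification---$\hat f$ is indeed continuous on $M(L^\infty)\supset S$, but what you actually need to pass from $f|_S\in H^\infty(\mu)$ to $f|_S\in H^\infty|_S$ is this weak-$*$ closedness, a known but nontrivial structural fact about support sets. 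With that substitution the argument is complete.
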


{\bf Proof of Theorem \ref{M2}.} Let $$A=H^\infty[f] \cap H^\infty[g, fg].$$ By Lemma \ref{1.3},
$$
M(A)=M(H^\infty[f]) \cup M(H^\infty[g, fg]).
$$
Suppose $$H^\infty[f] \cap H^\infty[g, fg]\subset H^\infty+C.$$ Then $A\subset H^\infty+C$, and so $M(H^\infty+C)\subset M(A).$
Let $m\in M(H^\infty+C)$. Lemma \ref{1.4} gives that either condition (1) or condition (2) holds.

Conversely, let $S$ be the support set for $m\in M(H^\infty+C)$ and suppose one of the Conditions (1) and (2) holds for $m$. Then by Lemma \ref{1.4}, either
$m\in M(H^\infty[f])$ or $m\in M(H^\infty[g, fg])$. Thus, $M(H^\infty+C)\subset M(A).$ By Chang-Marshall Theorem \cite{cha76,mar76}, $A=H^\infty+C$. $\Box$

\section{Compact Operators and Local Condition}
In this section, we present the main tools in the proof of Theorem \ref{M}.

The following lemma in \cite{guo03}*{Lemma 9} gives a nice property of compact operators.
\begin{lem}\label{ML}
If $K: H^2\to H^2$ is a compact operator, then
\beq\label{ml}
\lim_{|z|\to 1^-}||K-T_{\tilde{\phi_z}}KT_{\bar\phi_z}||=0.
\eeq
\end{lem}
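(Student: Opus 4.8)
The plan is to reduce \eqref{ml} to rank-one operators and then carry out an explicit boundary computation. Since $K$ is compact, for any $\varepsilon>0$ there is a finite-rank operator $F$ with $\|K-F\|<\varepsilon$. The symbols $\tilde{\phi_z}$ and $\bar{\phi_z}$ are unimodular on $\partial\DD$, so $\|T_{\tilde{\phi_z}}\|=\|T_{\bar{\phi_z}}\|=1$, whence $\|T_{\tilde{\phi_z}}(K-F)T_{\bar{\phi_z}}\|\le\|K-F\|<\varepsilon$. By the triangle inequality it therefore suffices to establish \eqref{ml} for finite-rank $F$, and by linearity for a single rank-one operator $F=x\otimes y$ with $x,y\in H^2$.

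For the rank-one case I would use $T_{\bar{\phi_z}}^{*}=T_{\phi_z}$ together with the rank-one identity $S(x\otimes y)T=(Sx)\otimes(T^{*}y)$ to write $T_{\tilde{\phi_z}}(x\otimes y)T_{\bar{\phi_z}}=(T_{\tilde{\phi_z}}x)\otimes(T_{\phi_z}y)$. A direct computation shows that on $\partial\DD$ one has $\tilde{\phi_z}=\overline{\phi_{\bz}}$, so that $T_{\phi_z}y=\phi_z\,y$ and $T_{\tilde{\phi_z}}x=P(\overline{\phi_{\bz}}\,x)$. The task is thus to understand the boundary behaviour of these two factors.

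Fix any sequence with $|z_n|\to 1$; passing to a subsequence I may assume $z_n\to\zeta$ for some $\zeta\in\partial\DD$. For a.e.\ $w\in\partial\DD$ one checks that $\phi_{z_n}(w)\to\zeta$, while $|\phi_{z_n}|\equiv 1$ on $\partial\DD$; dominated convergence then gives $\phi_{z_n}\to\zeta$ and $\overline{\phi_{\overline{z_n}}}\to\zeta$ in $L^{2}(\partial\DD)$. Consequently $T_{\phi_{z_n}}y\to\zeta y$ and $T_{\tilde{\phi_{z_n}}}x=P(\overline{\phi_{\overline{z_n}}}\,x)\to\zeta x$ in $H^{2}$. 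Using the joint continuity of the tensor map, namely $\|u\otimes v-u'\otimes v'\|\le\|u-u'\|\,\|v\|+\|u'\|\,\|v-v'\|$, I obtain $(T_{\tilde{\phi_{z_n}}}x)\otimes(T_{\phi_{z_n}}y)\to(\zeta x)\otimes(\zeta y)$ in operator norm. The crucial point is that $(\zeta x)\otimes(\zeta y)=|\zeta|^{2}(x\otimes y)=x\otimes y$, since the tensor is conjugate-linear in its second slot and the two unimodular phases cancel; hence the limit is exactly $F$, and $\|F-T_{\tilde{\phi_{z_n}}}F\,T_{\bar{\phi_{z_n}}}\|\to 0$ along the subsequence. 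A standard subsequence argument (any sequence with $|z_n|\to 1$ has a further subsequence tending to a boundary point, along which the norm tends to $0$) then upgrades this to the stated limit as $|z|\to 1^{-}$.

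I expect the main obstacle to be precisely this phase phenomenon: the factors $T_{\tilde{\phi_z}}x$ and $T_{\phi_z}y$ do \emph{not} converge to $x$ and $y$ but to the rotated vectors $\zeta x$ and $\zeta y$, so one cannot estimate the two factors separately and must keep the rank-one tensor intact, exploiting its invariance under a common unimodular rotation. Compactness of $\partial\DD$ is then needed to pass from limits at individual boundary points to the genuine limit $|z|\to 1^{-}$. By contrast, the reduction to rank-one operators and the uniform bounds $\|T_{\tilde{\phi_z}}\|,\|T_{\bar{\phi_z}}\|\le 1$ are routine.
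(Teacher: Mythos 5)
Your proof is correct, and it is essentially the standard argument for this lemma: the paper itself does not prove it but quotes it from \cite{guo03}*{Lemma 9}, and the proof there follows the same route you take --- approximate $K$ by finite-rank operators using $\|T_{\tilde{\phi_z}}\|,\|T_{\bar{\phi_z}}\|\leq 1$, reduce to a rank-one $x\otimes y$, write $T_{\tilde{\phi_z}}(x\otimes y)T_{\bar{\phi_z}}=(T_{\tilde{\phi_z}}x)\otimes(T_{\phi_z}y)$, and use that $\phi_z\to\zeta$ boundedly a.e.\ on $\partial\DD$ as $z\to\zeta\in\partial\DD$, with the two unimodular phases cancelling in $(\zeta x)\otimes(\zeta y)=x\otimes y$; your identification of this phase cancellation as the crux (rather than estimating the two tensor factors separately against $x$ and $y$) is exactly right. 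One cosmetic repair: $L^2$ convergence of the symbols does not by itself yield $\phi_{z_n}y\to\zeta y$ for general $y\in H^2$ via Cauchy--Schwarz, so either apply dominated convergence directly to $\int_{\partial\DD}|\phi_{z_n}-\zeta|^2|y|^2$ (dominated by $4|y|^2$, using the pointwise convergence you already established) or prove it first for $y\in H^\infty$ and use density together with the uniform bound $\|\phi_{z_n}-\zeta\|_\infty\leq 2$.
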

\begin{rem}
By the Corona Theorem, \eqref{ml} can be restated as the following:\\
For each $m\in M(H^\infty+C)$, there is a net $z\to m$ such that
$$
\lim_{z\to m}||K-T_{\tilde{\phi_z}}KT_{\bar\phi_z}||=0.
$$
\end{rem}

In \cite{guo05}, Guo and Zheng used the distribution function inequality to prove the following theorem, which can be viewed as a partial converse of Lemma \ref{ML}.
\begin{thm}\label{gz}
Let $T$ be a finite sum of finite products of Toeplitz operators. Then $T$ is a compact perturbation of a Toeplitz operator if and only if
$$
\lim_{|z|\to 1^-}||T-T^*_{\phi_z}TT_{\phi_z}||=0.
$$
\end{thm}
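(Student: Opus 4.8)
The plan is to separate the two implications after one structural reduction. By repeated use of Proposition \ref{prop}(1), write the given $T$ as $T=T_\Phi+R$, where $\Phi\in L^\infty$ is the formal symbol obtained by replacing each product $T_{f_1}\cdots T_{f_n}$ by the single operator $T_{f_1\cdots f_n}$, and $R=T-T_\Phi$ is again a finite sum of finite products of Toeplitz operators, now with vanishing formal symbol. The computation I would record first is that conjugation by $T_{\phi_z}$ fixes every Toeplitz operator. Indeed $\phi_z$ is inner, so $H_{\phi_z}=0$ and $|\phi_z|=1$ a.e.\ on $\partial\DD$; hence Proposition \ref{prop}(1) gives $T_{\bar\phi_z}T_\Psi=T_{\bar\phi_z\Psi}$ and $T_{\bar\phi_z\Psi}T_{\phi_z}=T_{\bar\phi_z\Psi\phi_z}=T_\Psi$, so that $T^*_{\phi_z}T_\Psi T_{\phi_z}=T_{\bar\phi_z}T_\Psi T_{\phi_z}=T_\Psi$ for every $\Psi\in L^\infty$ and every $z\in\DD$. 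Thus the Toeplitz part never contributes to the defect $T-T^*_{\phi_z}TT_{\phi_z}$.

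For the easy implication, suppose $T=T_\psi+K$ with $K$ compact. By the invariance above, $T-T^*_{\phi_z}TT_{\phi_z}=K-T_{\bar\phi_z}KT_{\phi_z}$, so it suffices to show $||K-T_{\bar\phi_z}KT_{\phi_z}||\to 0$ as $|z|\to 1^-$; this parallels Lemma \ref{ML} but with the present conjugation. By density I would reduce to finite rank and then to $K=x\otimes y$, where the identity $S(x\otimes y)T=(Sx)\otimes(T^*y)$ yields $T_{\bar\phi_z}(x\otimes y)T_{\phi_z}=(T_{\bar\phi_z}x)\otimes(T_{\bar\phi_z}y)$. Along any sequence $z_n$ with $z_n\to\zeta\in\partial\DD$ one has $\phi_{z_n}\to\zeta$ a.e., so dominated convergence gives $T_{\bar\phi_{z_n}}x\to\bar\zeta x$ and $T_{\bar\phi_{z_n}}y\to\bar\zeta y$ in $H^2$; since $x\otimes y$ is conjugate linear in its second argument the two unimodular constants cancel and $(T_{\bar\phi_{z_n}}x)\otimes(T_{\bar\phi_{z_n}}y)\to x\otimes y$ in operator norm. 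A routine subsequence argument upgrades this to the full limit $|z|\to 1^-$.

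For the substantive implication, assume $\lim_{|z|\to 1^-}||T-T^*_{\phi_z}TT_{\phi_z}||=0$ and write $T=T_\Phi+R$. By the invariance of the Toeplitz part this hypothesis says exactly that $\lim_{|z|\to 1^-}||R-T_{\bar\phi_z}RT_{\phi_z}||=0$, and it now suffices to prove that $R$ is compact, since then $T=T_\Phi+R$ is the desired compact perturbation. Taking adjoints, $R^*$ is again a finite sum of finite products of Toeplitz operators and $||R^*-T_{\bar\phi_z}R^*T_{\phi_z}||=||R-T_{\bar\phi_z}RT_{\phi_z}||$, so the hypothesis is symmetric in $R$ and $R^*$. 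The route I would follow has two steps: first, convert the vanishing of the defect into the boundary decay $\lim_{|z|\to 1^-}||Rk_z||=\lim_{|z|\to 1^-}||R^*k_z||=0$, organizing the computation around the decomposition $h=T_{\phi_z}(T_{\bar\phi_z}h)+\langle h,k_z\rangle k_z$ together with $T_{\bar\phi_z}k_z=0$; second, deduce compactness of $R$ from this decay. The second step is where the hypothesis that $R$ lies in the algebra of finite sums of finite products of Toeplitz operators is indispensable: for an arbitrary bounded operator, decay of $||Rk_z||$ at the boundary does not imply compactness, and only the special structure of $R$ rescues the implication. This is exactly the point at which Guo and Zheng's distribution function inequality enters, furnishing the quantitative control on the defect function needed to force compactness, and I expect this estimate to be the main obstacle of the proof; by contrast the first step and all of the necessity direction are comparatively soft.
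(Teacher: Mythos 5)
First, a point of comparison: the paper does not prove this theorem at all. It is quoted from Guo and Zheng \cite{guo05}, where it is established by means of a distribution function inequality, and the paper uses it as a black box (via Corollary \ref{cor}). So the benchmark your proposal must meet is that hard-analysis argument, not a soft operator-theoretic one. Your structural reduction and your easy implication are correct: the invariance $T^*_{\phi_z}T_\Psi T_{\phi_z}=T_\Psi$ holds exactly as you compute (both products collapse because $\phi_z$ and $\widetilde{\bar\phi_z}=\phi_{\bar z}$ are analytic and $|\phi_z|=1$ a.e.\ on $\partial\DD$), the decomposition $T=T_\Phi+R$ with $R$ of vanishing formal symbol is legitimate by Proposition \ref{prop}(1), and the necessity direction via rank-one approximation, $\phi_{z_n}\to\zeta$ a.e., dominated convergence, and a subsequence argument is complete and standard.

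The genuine gap is the sufficiency direction, and it occurs at both of your steps. Step (1), extracting $\lim_{|z|\to1^-}\|Rk_z\|=0$ from $\lim_{|z|\to1^-}\|R-T_{\bar\phi_z}RT_{\phi_z}\|=0$, is not carried out, and the manipulations you gesture at provably yield nothing: writing $D_z=R-T_{\bar\phi_z}RT_{\phi_z}$, the identities $T_{\bar\phi_z}k_z=0$ and $T_{\phi_z}T_{\bar\phi_z}=1-k_z\otimes k_z$ give, e.g., $D_zT_{\bar\phi_z}k_z=0$ and $T_{\phi_z}D_zT_{\bar\phi_z}k_z=0$ identically in $z$ for \emph{every} bounded $R$, while applying $D_z$ directly to $k_z$ gives only $\|Rk_z-T_{\bar\phi_z}R(\phi_zk_z)\|\to0$, with no soft way to dispose of the second term. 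Nor can any soft argument exist: $R=I=T_1$ has identically vanishing defect while $\|Rk_z\|\equiv1$, so the implication must exploit the zero-symbol structure of $R$ quantitatively — which is exactly what Guo and Zheng's distribution function inequality is for. Step (2) you concede outright, invoking that inequality as an unproved black box; but passing from boundary decay (or from the defect condition) to compactness for operators in the Toeplitz algebra \emph{is} the content of the theorem. In sum, your proposal correctly proves the easy half and the reduction to a zero-symbol remainder, but for the hard half it assumes the cited result rather than proving it, so as a proof of the stated theorem it is incomplete in an essential way.
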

\begin{rem}
Theorem \ref{gz} cannot be applied directly to $H_fT_g$, since $H_fT_g$ might not be a finite sum of finite products of Toeplitz operators. However, by Proposition \ref{prop},
$$
(H_fT_g)^*(H_fT_g)=T_{\bg}H^*_fH_fT_g=T_{\bg}(T_{{\barf}f}-T_{\barf}T_f)T_g,
$$
thus $(H_fT_g)^*(H_fT_g)$ is a finite sum of finite products of Toeplitz operators.
\end{rem}
\begin{rem}
The symbol map $\Gs$ that sends every Toepltiz operator $T_\phi$ to its symbol $\phi$ was introduced in \cite{dou72} and can be defined on the Toeplitz algebra, the closed algebra generated by Toeplitz operators. Barr\'ia and Halmos in \cite{ba82} showed that $\Gs$ can be extended to a $*$-homomorphism on the Hankel algebra, the closed algebra generated by Toeplitz and Hankel operators. And they also showed that the symbols of compact operators and Hankel operators are zero. Note that $(H_fT_g)^*(H_fT_g)$ has symbol zero, so it is a compact perturbation of a Toeplitz operator if and only if it is compact.
\end{rem}
By Theorem \ref{gz} and above remarks, we have
\begin{cor}\label{cor}
$K=H_fT_g$ is compact if and only if
$$
\lim_{|z|\to 1^-}||K^*K-T^*_{\phi_z}K^*KT_{\phi_z}||=0.
$$
\end{cor}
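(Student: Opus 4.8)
The plan is to prove Corollary \ref{cor} by combining Theorem \ref{gz} with the two remarks that immediately precede the statement. The key observation is that while $K = H_fT_g$ itself need not be a finite sum of finite products of Toeplitz operators, the operator $K^*K$ \emph{is} such a sum. First I would verify this algebraic fact explicitly: using $H^*_f = H_{f^*}$ and part (1) of Proposition \ref{prop} with the symbols $\barf$ and $f$, one has $H^*_fH_f = H_{\barf}H_f = T_{\barf f} - T_{\barf}T_f$, and therefore
\[
K^*K = T_{\bg}H^*_fH_fT_g = T_{\bg}\bigl(T_{\barf f}-T_{\barf}T_f\bigr)T_g,
\]
which is manifestly a finite sum of finite products of Toeplitz operators. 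Thus Theorem \ref{gz} applies to $T = K^*K$.

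Next I would apply Theorem \ref{gz} to conclude that $K^*K$ is a compact perturbation of a Toeplitz operator if and only if $\lim_{|z|\to 1^-}\|K^*K - T^*_{\phi_z}K^*KT_{\phi_z}\| = 0$. To close the argument I must replace ``compact perturbation of a Toeplitz operator'' with the cleaner statement ``compact,'' and then transfer compactness of $K^*K$ back to compactness of $K$ itself. For the first replacement I invoke the symbol-map remark: $\sigma$ extends to a $*$-homomorphism on the Hankel algebra sending Toeplitz symbols to their symbols and killing both compact operators and Hankel operators. Since $K^*K$ lies in the Hankel algebra and is built from Hankel operators in a way that forces $\sigma(K^*K) = 0$, the only Toeplitz operator it could differ from by a compact operator is $T_0 = 0$; hence $K^*K$ is a compact perturbation of a Toeplitz operator precisely when $K^*K$ is compact.

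Finally I would use the standard functional-analytic equivalence that a bounded operator $K$ on Hilbert space is compact if and only if $K^*K$ is compact. This follows because $K^*K$ compact gives, via the polar decomposition or the square root, that $|K| = (K^*K)^{1/2}$ is compact (the continuous functional calculus applied to a compact positive operator with $t\mapsto\sqrt{t}$ preserves compactness), and $K = V|K|$ for a partial isometry $V$ shows $K$ is compact; the converse is immediate. Combining these three steps yields that $K = H_fT_g$ is compact if and only if $\lim_{|z|\to 1^-}\|K^*K - T^*_{\phi_z}K^*KT_{\phi_z}\| = 0$, which is exactly the assertion of Corollary \ref{cor}.

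The routine algebra and the Hilbert-space equivalence $K$ compact $\iff$ $K^*K$ compact are standard; the genuine content is entirely borrowed from the cited results. The main point requiring care is the justification that $\sigma(K^*K)=0$, so that ``compact perturbation of a Toeplitz operator'' collapses to ``compact''—this is where the Barr\'ia–Halmos symbol map and the vanishing of symbols on Hankel operators and compacts must be applied correctly. Since that step is already summarized in the preceding remark, the corollary follows by assembling these ingredients rather than by any new estimate.
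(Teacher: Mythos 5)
Your proposal is correct and follows essentially the same route as the paper: the paper likewise obtains the corollary by noting $K^*K=T_{\bar{g}}(T_{\bar{f}f}-T_{\bar{f}}T_f)T_g$ is a finite sum of finite products of Toeplitz operators, applying Theorem \ref{gz}, using the Barr\'ia--Halmos symbol map to reduce ``compact perturbation of a Toeplitz operator'' to ``compact,'' and invoking the standard equivalence that $K$ is compact if and only if $K^*K$ is. Your only addition is spelling out the $C^*$-algebra details (functional calculus, polar decomposition) that the paper leaves implicit, and your slightly vague phrase about $\sigma(K^*K)=0$ is justified in one line by multiplicativity: $\sigma(K^*K)=\sigma(T_{\bar{g}})\sigma(H_f^*)\sigma(H_f)\sigma(T_g)=0$ since Hankel symbols vanish.
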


The following lemma from \cite{gor99}*{Lemma 2.5, 2.6} which interprets the local condition in an elementary way, will be used several times later.
\begin{lem}\label{lim}
Let $f\in L^\infty$, $m\in M(H^\infty+C)$, and let $S$ be the support set of $m$. Then the following are equivalent:
\begin{enumerate}
\item $f|_S\in H^\infty|_S.$
\item $\varliminf\limits_{z\to m}||H_f k_z||=0.$
\item $\lim\limits_{z\to m}||H_f k_z||=0.$
\end{enumerate}
\end{lem}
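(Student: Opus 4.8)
The plan is to collapse the three conditions onto a single distance identity and then transport it to the support set by taking weak-$*$ limits of harmonic measures. For $z\in\DD$ let $d\nu_z=|k_z|^2\,\tfrac{d\theta}{2\pi}$ be the harmonic measure at $z$. The first thing I would establish is
$$\|H_fk_z\|=\|\cH_fk_z\|=\operatorname{dist}_{L^2(\nu_z)}(f,H^2)=\operatorname{dist}_{L^2(\nu_z)}(f,H^\infty).$$
For this, write for $h\in H^2$, using $\cH_h=0$ and $H_f=U\cH_f$,
$$\|(f-h)k_z\|_{L^2}^2=\|\cH_f k_z\|^2+\|T_{f-h}k_z\|^2,$$
and note that the choice $h=(T_fk_z)/k_z\in H^2$ kills the second term, so the infimum over $h\in H^2$ of $\|(f-h)k_z\|_{L^2}=\|f-h\|_{L^2(\nu_z)}$ equals $\|\cH_fk_z\|$; density of $H^\infty$ in $H^2$ and the comparability of $\nu_z$ with $d\theta$ give the last equality. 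Equivalently, via the unitary $V_zg=(g\circ\phi_z)k_z$, which maps $H^2$ onto $H^2$, one has $\|H_fk_z\|=\|(I-P)(f\circ\phi_z)\|_{L^2}$, and $f\circ\phi_z$ is a uniformly bounded family.

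Next I would transport this to $S$. Along a net $z\to m$ the measures $\nu_z$, regarded on $M(L^\infty)$ through the Gelfand transform, lie in a weak-$*$ compact set, every cluster point is a representing measure for $m$, and by the structure of support sets each such measure has support $S$. Fixing such a net and passing to a subnet with $\nu_z\to\mu$ weak-$*$, for each fixed $h\in H^\infty$ the function $|f-h|^2\in L^\infty=C(M(L^\infty))$, so $\|f-h\|_{L^2(\nu_z)}\to\|f-h\|_{L^2(\mu)}$; taking the infimum over $h$ yields the upper semicontinuity
$$\limsup_{z\to m}\|H_fk_z\|\le \operatorname{dist}_{L^2(\mu)}(f,H^\infty).$$
This already disposes of the easy implications. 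The step $(3)\Rightarrow(2)$ is immediate, and for $(1)\Rightarrow(3)$, if $f|_S\in H^\infty|_S$ we choose $h\in H^\infty$ with $f=h$ on $S$; then $|f-h|^2$ vanishes on $S=\operatorname{supp}\mu$, so $\operatorname{dist}_{L^2(\mu)}(f,H^\infty)=0$ for every cluster measure $\mu$, and the displayed bound forces $\lim_{z\to m}\|H_fk_z\|=0$.

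For the hard implication $(2)\Rightarrow(1)$ I would use the bridge fact that, for $f\in L^\infty$, $f|_S\in H^\infty|_S$ if and only if $\operatorname{dist}_{L^2(\mu_m)}(f,H^\infty)=0$; this rests on the weak-$*$ Dirichlet (logmodular) character of the system $(H^\infty|_S,\mu_m)$, which forces a bounded function in the $L^2(\mu_m)$-closure of $H^\infty$ to agree $\mu_m$-a.e.\ with a function of $H^\infty|_S$. Thus it suffices to produce, from $\varliminf_{z\to m}\|H_fk_z\|=0$, a representing measure $\mu$ with support $S$ for which $\operatorname{dist}_{L^2(\mu)}(f,H^\infty)=0$. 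Choosing a net $z_\alpha\to m$ with $\|H_fk_{z_\alpha}\|\to0$ and passing to a subnet with $\nu_{z_\alpha}\to\mu$ and $f\circ\phi_{z_\alpha}\to F$ weak-$*$ in $L^\infty$, the vanishing of $\operatorname{dist}_{L^2}(f\circ\phi_{z_\alpha},H^2)$ together with the weak closedness of $H^2$ places the limit in $H^2\cap L^\infty=H^\infty$, and one then identifies its restriction to $S$ with $f|_S$ by pairing against test functions and passing to the limit through $\nu_{z_\alpha}\to\mu$.

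I expect this last step to be the main obstacle. The difficulty is precisely the failure of \emph{lower} semicontinuity of $\operatorname{dist}_{L^2(\cdot)}(f,H^\infty)$ under mere weak-$*$ convergence of the measures: the $L^2(\nu_z)$-best approximants are not controlled in $L^\infty$, so one cannot naively pass a weak-$*$ limit inside the distance. The resolution is to work with the uniformly bounded family $f\circ\phi_z$ in place of the approximants and to exploit the special function theory of the representing measure $\mu_m$ on the support set, namely its logmodularity and the resulting Dirichlet structure, which is exactly what validates both the bridge fact and the limit identification, as in \cite{gor99}*{Lemma 2.6}.
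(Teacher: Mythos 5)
Note first that the paper does not prove this lemma at all: it is imported verbatim from \cite{gor99}*{Lemmas 2.5, 2.6}, so the comparison is with the Gorkin--Zheng argument your proposal is implicitly reconstructing. Your framework is the right one and much of it is correct. The distance identity $\|H_fk_z\|=\operatorname{dist}_{L^2(\nu_z)}(f,H^2)=\|(I-P)(f\circ\phi_z)\|_{L^2}$ is valid (the key point, which you use correctly, is that $k_z$ is invertible in $H^\infty$, so $h\mapsto hk_z$ maps $H^2$ onto $H^2$ and your choice $h=(T_fk_z)/k_z$ is legitimate), and with it $(3)\Rightarrow(2)$ and $(1)\Rightarrow(3)$ go through: every weak-$*$ cluster point of $\nu_z$ on $M(L^\infty)$ represents $m$, and since $H^\infty$ is logmodular the representing measure is \emph{unique} --- that uniqueness, rather than ``the structure of support sets,'' is what forces every cluster point to be $\mu_m$ with support $S$ and makes your upper-semicontinuity bound apply along the full net.

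The genuine gap is $(2)\Rightarrow(1)$, and it sits exactly where you flag it, but flagging it does not fill it. Two steps are asserted rather than proved. First, the ``bridge fact'' that a bounded $f$ in the $L^2(\mu_m)$-closure of $H^\infty$ satisfies $f|_S\in H^\infty|_S$ requires that $H^\infty|_S$ exhaust $H^2(\mu_m)\cap L^\infty(\mu_m)$, i.e.\ a weak-$*$ closedness property of the restriction algebra on the support set; this is a nontrivial theorem about $M(H^\infty)$, not a formal consequence of logmodularity, and you give no argument for it. Second, the identification of the weak-$*$ limit $F$ of $f\circ\phi_{z_\alpha}$ with $f$ on $S$ fails as sketched: the only pairings available, such as $\int f\bar q\,d\nu_{z_\alpha}=\int (f\circ\phi_{z_\alpha})(\bar q\circ\phi_{z_\alpha})\,\frac{d\theta}{2\pi}$, have \emph{both} factors varying with $\alpha$, and a weak-$*$ limit of products is not the product of the limits; trading $f\circ\phi_{z_\alpha}$ for the $L^2$-close analytic approximants $h_\alpha$ merely exchanges this for the $L^\infty$-unboundedness of $h_\alpha$, the very obstruction you identified. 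Your closing ``resolution'' is to invoke \cite{gor99}*{Lemma 2.6} --- but that citation \emph{is} the statement under proof, so the argument is circular at its only hard point. Closing it honestly requires additional machinery (for instance Hoffman's theory of the maps $\phi_{z_\alpha}$ converging to the Hoffman map $L_m$, whose representing measures are carried by $S$), which is precisely what the cited lemmas supply and what your proposal does not reconstruct.
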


We also need the following technical lemma.
\begin{lem}\label{*lim} \cite{guo03}*{Lemma 17,18}
Let $f,g\in L^\infty$, $m\in M(H^\infty+C)$.
\begin{enumerate}
\item If
$$\lim_{z\to m}||H_f k_z||=0,$$ then $$\lim_{z\to m}||H_fT_g k_z||=0.$$
\item If
$$\lim_{z\to m}||H^*_f k_{\bz}||=0,$$ then $$\lim_{z\to m}||H^*_fT_g k_{\bz}||=0.$$
\end{enumerate}
\end{lem}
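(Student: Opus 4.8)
The plan is to prove part (1) in full and then obtain part (2) from it by the $*$-operation, so that only one genuine argument is needed. For the reduction, recall from the proof of Lemma \ref{*} that $P(u^*)=(Pu)^*$ and $(Uu)^*=Uu^*$; combined with $(gh)^*=g^*h^*$ these give $(T_g v)^*=T_{g^*}v^*$ for $v\in H^2$, and a one-line computation with the kernel formula gives $(k_{\bz})^*=k_z$. Writing Lemma \ref{*} in the form $\|H_{f^*}w\|=\|H_f w^*\|$ (using $H_f^*=H_{f^*}$ and the involution $(w^*)^*=w$) then yields $\|H_f^*T_g k_{\bz}\|=\|H_f T_{g^*}k_z\|$ and $\|H_f^* k_{\bz}\|=\|H_f k_z\|$. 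Hence the hypothesis and conclusion of (2) for the pair $(f,g)$ are exactly those of (1) for the pair $(f,g^*)$ along the very same net $z\to m$, and (2) is immediate once (1) is established.

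To prove (1) I would first settle the two extreme classes of symbols and then approximate a general $g$. Let $S$ be the support set of $m$; by Lemma \ref{lim} the hypothesis $\lim_{z\to m}\|H_f k_z\|=0$ is equivalent to $f|_S\in H^\infty|_S$. If $a\in H^\infty$, then $H_a=0$, so Proposition \ref{prop}(2) collapses to $H_f T_a=H_{fa}$; since $H^\infty|_S$ is an algebra and $a|_S\in H^\infty|_S$, we get $(fa)|_S=f|_S\,a|_S\in H^\infty|_S$, whence $\lim_{z\to m}\|H_f T_a k_z\|=\lim_{z\to m}\|H_{fa}k_z\|=0$ by Lemma \ref{lim} again. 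If $g=\bar b$ with $b\in H^\infty$, the co-analytic reproducing-kernel identity $T_{\bar b}k_z=\overline{b(z)}\,k_z$ gives $\|H_f T_{\bar b}k_z\|=|b(z)|\,\|H_f k_z\|\le\|b\|_\infty\|H_f k_z\|\to 0$. By linearity the conclusion of (1) holds for every $g\in H^\infty+\overline{H^\infty}$.

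For a general $g\in L^\infty$ and any $g'\in H^\infty+\overline{H^\infty}$, the elementary bound $\|H_f T_{g-g'}k_z\|\le\|f\|_\infty\|(g-g')k_z\|$ together with $\|(g-g')k_z\|^2=\langle|g-g'|^2 k_z,k_z\rangle\to\int_S|g-g'|^2\,d\mu_m$ as $z\to m$ reduces everything to approximating $g$ in $L^2(\mu_m)$ by elements of $H^\infty+\overline{H^\infty}$: given $\varepsilon>0$ one picks $g'$ with $\int_S|g-g'|^2\,d\mu_m<\varepsilon$, obtains $\limsup_{z\to m}\|H_f T_g k_z\|\le\|f\|_\infty\sqrt\varepsilon$ from the previous paragraph, and lets $\varepsilon\to 0$.

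The main obstacle is precisely this density statement. It rests on the fact that the restriction algebra $H^\infty|_S$ is logmodular on $S$, which supplies the orthogonal decomposition $L^2(\mu_m)=H^2(\mu_m)\oplus\overline{H^2_0(\mu_m)}$ and hence the density of $H^\infty|_S+\overline{H^\infty|_S}$; one then lifts the approximants back to genuine $H^\infty$ functions. I would also need to justify with care the convergence $\langle|g-g'|^2 k_z,k_z\rangle\to\int_S|g-g'|^2\,d\mu_m$ (convergence of Berezin transforms to integrals against the representing measure) and its compatibility with the net $z\to m$; as in Lemma \ref{lim}, working with $\varliminf$ and passing to subnets where necessary removes any difficulty in extracting the limit.
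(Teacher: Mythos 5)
Your proof is correct, but it takes a genuinely different route from the one behind the paper: the paper does not prove Lemma \ref{*lim} at all, importing it from \cite{guo03}, where Lemmas 17 and 18 are obtained by hard analysis --- the distribution function inequality technique going back to \cite{zh96} --- which yields quantitative estimates bounding $\|H_fT_gk_z\|$ at every $z\in\mathbb{D}$ by a fixed positive power of $\|H_fk_z\|$, with constants depending only on $\|f\|_\infty$ and $\|g\|_\infty$. You instead argue softly: the reduction of (2) to (1) via the involution of Lemma \ref{*} is sound (your identities $(T_gv)^*=T_{g^*}v^*$ and $(k_{\bar z})^*=k_z$ check out), the cases $g=a\in H^\infty$ (via $H_fT_a=H_{fa}$, Proposition \ref{prop}, and Lemma \ref{lim}) and $g=\bar b$ (via $T_{\bar b}k_z=\overline{b(z)}\,k_z$) are handled correctly, and the passage to general $g$ through $\|H_fT_{g-g'}k_z\|\le\|f\|_\infty\|(g-g')k_z\|$ is valid. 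The two facts you flag as the crux are indeed the crux, and both are standard in exactly this corner of the literature: since $H^\infty$ is logmodular on $M(L^\infty)$, each $m$ has a \emph{unique} representing measure, which gives both the Hoffman decomposition $L^2(\mu_m)=H^2(\mu_m)\oplus\overline{H^2_0(\mu_m)}$ (see \cite{gar81}) --- and note that no ``lifting'' is needed, since $H^2(\mu_m)$ is by definition the $L^2(\mu_m)$-closure of $H^\infty$, so your approximants $g'=a+\bar b$ can be taken in $H^\infty+\overline{H^\infty}$ from the start --- and the weak-star continuity of $m\mapsto\mu_m$, so that the Poisson--Berezin convergence $\langle|g-g'|^2k_z,k_z\rangle\to\int_S|g-g'|^2\,d\mu_m$ holds along the full net $z\to m$; your closing hedge about $\varliminf$ and subnets is therefore unnecessary. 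As for what each approach buys: yours is self-contained modulo the same abstract toolkit already needed for Lemma \ref{lim} (whose proof in \cite{gor99} uses these very facts), but it is purely qualitative; the distribution-function-inequality proof in \cite{guo03} is heavier but produces uniform norm estimates valid at all $z\in\mathbb{D}$, which is the machinery that also powers Theorem \ref{gz} and cannot be recovered from the soft argument.
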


\section{Proof of the Main Theorem}
In this section, we prove Theorem \ref{M}. First we set up the following two identities:
\begin{lem}\label{ML2}\cite{guo03}*{Lemma 6}
Let $f,g\in L^\infty$ and $z\in\DD$. Then
$$
T_{\tilde{\phi_z}}H_fT_gT_{\bar{\phi_z}}=H_fT_g-(H_fT_gk_z)\otimes k_z+(H_fk_z)\otimes(T_{\phi_z}H^*_fk_{\bz})
$$
\end{lem}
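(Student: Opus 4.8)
The plan is to push the two outer Toeplitz factors inward using the multiplicative relations of Proposition \ref{prop}, and then to read off the rank-one corrections from the identities of Lemma \ref{ID}. The decisive first observation is a commutation relation. Since $\phi_z\in H^\infty$ we have $H_{\phi_z}=0$, so applying part (2) of Proposition \ref{prop} once with symbols $f,\phi_z$ and once with symbols $\phi_z,f$ gives
$$
H_fT_{\phi_z}=H_{f\phi_z}=H_{\phi_z f}=T_{\tilde{\phi_z}}H_f .
$$
This lets me trade the left factor $T_{\tilde{\phi_z}}$ for a factor $T_{\phi_z}$ placed immediately after $H_f$, so the target becomes $H_fT_{\phi_z}T_gT_{\bar{\phi_z}}$.

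Next I would commute $T_{\phi_z}$ past $T_g$. Part (1) of Proposition \ref{prop} gives $T_{\phi_z}T_g=T_{\phi_z g}-H_{\tilde{\phi_z}}H_g$, while $T_gT_{\phi_z}=T_{\phi_z g}$ (again using $H_{\phi_z}=0$), so that $T_{\phi_z}T_g=T_gT_{\phi_z}-H_{\tilde{\phi_z}}H_g$. Substituting,
$$
H_fT_{\phi_z}T_gT_{\bar{\phi_z}}=H_fT_g\,T_{\phi_z}T_{\bar{\phi_z}}-H_fH_{\tilde{\phi_z}}H_gT_{\bar{\phi_z}} .
$$
The first piece is immediate from part (1) of Lemma \ref{ID}, namely $T_{\phi_z}T_{\bar{\phi_z}}=1-k_z\otimes k_z$, which produces exactly the desired leading terms $H_fT_g-(H_fT_gk_z)\otimes k_z$.

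For the remaining piece I need $H_{\tilde{\phi_z}}$. From $\phi_z(w)=(z-w)/(1-\bz w)$ one checks that $\tilde{\phi_z}=\bar{\phi}_{\bz}$, so part (3) of Lemma \ref{ID} applied at $\bz$ yields $H_{\tilde{\phi_z}}=-k_z\otimes k_{\bz}$. Feeding this into $-H_fH_{\tilde{\phi_z}}H_gT_{\bar{\phi_z}}$ and using the rank-one calculus $S(x\otimes y)T=(Sx)\otimes(T^*y)$ together with $T_{\bar{\phi_z}}^*=T_{\phi_z}$ collapses the term to $(H_fk_z)\otimes(T_{\phi_z}H_g^*k_{\bz})$, which completes the identity. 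I expect the only point demanding care to be this final collapse, specifically \emph{which} Hankel operator survives in the middle: the degenerate case $g\equiv 1$, where the commutation relation alone forces the right-hand side to be $H_f-(H_fk_z)\otimes k_z$ with no third term, pins down that the surviving factor must vanish whenever $g$ is analytic, i.e.\ it is $H_g^*$ rather than $H_f^*$. Everything else is a routine application of the two parts of Proposition \ref{prop} and the three identities of Lemma \ref{ID}, so the computation of $H_{\tilde{\phi_z}}$ and the bookkeeping of this middle factor are the real obstacle.
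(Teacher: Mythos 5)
Your derivation is correct and is essentially the canonical one: the paper gives no proof of Lemma \ref{ML2} (it is imported from \cite{guo03}*{Lemma 6}), and your computation --- trading $T_{\tilde{\phi_z}}H_f$ for $H_fT_{\phi_z}$ via $H_{f\phi_z}=H_{\phi_z f}$, writing $T_{\phi_z}T_g=T_gT_{\phi_z}-H_{\tilde{\phi_z}}H_g$, and collapsing with $T_{\phi_z}T_{\bar{\phi_z}}=1-k_z\otimes k_z$ and $H_{\tilde{\phi_z}}=H_{\bar{\phi}_{\bar{z}}}=-k_z\otimes k_{\bar{z}}$ --- is exactly the argument behind the cited lemma and runs parallel to the paper's own in-text proof of the companion Lemma \ref{ML3}.

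Your closing observation deserves emphasis rather than apology: the statement as printed, with third term $(H_fk_z)\otimes(T_{\phi_z}H^*_fk_{\bar{z}})$, contains a typo, and your $(H_fk_z)\otimes(T_{\phi_z}H^*_gk_{\bar{z}})$ is the correct right-hand side. Your $g\equiv 1$ sanity check refutes the printed version outright (the left side is then $H_f-(H_fk_z)\otimes k_z$, while the printed third term need not vanish), and the paper itself applies the lemma with $H^*_g$ in the first display of the proof of Theorem \ref{M}, so your version is the one actually used downstream. You were also right to derive the commutation $H_fT_{\phi_z}=T_{\tilde{\phi_z}}H_f$ directly from Proposition \ref{prop}(2) with $H_{\phi_z}=0$ rather than quoting part (3) of that proposition, which as printed ($H_fT_g=T_{\tilde{f}}H_g$ for $g\in H^\infty$) suffers from the same kind of symbol swap --- it should read $H_fT_g=T_{\tilde{g}}H_f$.
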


\begin{lem}\label{ML3}
Let $f,g\in L^\infty$, $z\in\DD$ and $K=H_fT_g$. Then
$$
KT_{\phi_z}=T_{\tilde{\phi_z}}K-(H_fk_z)\otimes(H^*_gk_{\bz}).
$$
\end{lem}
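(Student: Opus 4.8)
The plan is to substitute $K=H_fT_g$ on both sides and reduce the claimed identity to a statement about the semicommutator $T_{\phi_z}T_g-T_gT_{\phi_z}$, which the rank-one formulas of Lemma \ref{ID} then evaluate explicitly.

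First I would record the intertwining relation $H_fT_{\phi_z}=T_{\tilde{\phi_z}}H_f$. Since $\phi_z$ is a M\"obius transform, $\phi_z\in H^\infty$, so $H_{\phi_z}=0$. Applying Proposition \ref{prop}(2) to the two orderings of the product $\phi_z f=f\phi_z$ gives $H_{f\phi_z}=H_fT_{\phi_z}$ and $H_{\phi_z f}=T_{\tilde{\phi_z}}H_f$; comparing them yields the displayed relation. Consequently $T_{\tilde{\phi_z}}K=T_{\tilde{\phi_z}}H_fT_g=H_fT_{\phi_z}T_g$, while $KT_{\phi_z}=H_fT_gT_{\phi_z}$, so that
$$T_{\tilde{\phi_z}}K-KT_{\phi_z}=H_f\bigl(T_{\phi_z}T_g-T_gT_{\phi_z}\bigr).$$

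Next I would evaluate the commutator. By Proposition \ref{prop}(1), $T_{\phi_z g}=T_{\phi_z}T_g+H_{\tilde{\phi_z}}H_g$ and $T_{g\phi_z}=T_gT_{\phi_z}+H_{\tilde g}H_{\phi_z}=T_gT_{\phi_z}$, again because $H_{\phi_z}=0$. Since $\phi_z g=g\phi_z$, subtracting gives $T_{\phi_z}T_g-T_gT_{\phi_z}=-H_{\tilde{\phi_z}}H_g$. The remaining ingredient is the rank-one form of $H_{\tilde{\phi_z}}$: here I would use that $(\bar{\phi_z})^*=\tilde{\phi_z}$, so that $H_{\tilde{\phi_z}}=H^*_{\bar{\phi_z}}$, and then combine Lemma \ref{ID}(3) with $(x\otimes y)^*=y\otimes x$ to obtain $H_{\tilde{\phi_z}}=-k_z\otimes k_{\bz}$.

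Finally, the identity $S(x\otimes y)T=(Sx)\otimes(T^*y)$ gives $H_fH_{\tilde{\phi_z}}H_g=-(H_fk_z)\otimes(H^*_gk_{\bz})$, whence $T_{\tilde{\phi_z}}K-KT_{\phi_z}=-H_fH_{\tilde{\phi_z}}H_g=(H_fk_z)\otimes(H^*_gk_{\bz})$, which rearranges to the asserted formula. I expect the only genuine obstacle to be the bookkeeping of the tilde and conjugate symbols: Lemma \ref{ID} supplies $H_{\bar{\phi_z}}$, but the commutator naturally produces $H_{\tilde{\phi_z}}$, and one must pass between the two correctly through the $*$-operation before the rank-one algebra closes everything out.
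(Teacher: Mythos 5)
Your proposal is correct and follows essentially the same route as the paper: both rest on Proposition \ref{prop} (the identities $T_gT_{\phi_z}=T_{\phi_z}T_g+H_{\tilde{\phi_z}}H_g$ and $H_fT_{\phi_z}=T_{\tilde{\phi_z}}H_f$ — the latter cited by the paper as item (3) and rederived by you from item (2) via $H_{\phi_z}=0$) together with the rank-one evaluation $H_fH_{\tilde{\phi_z}}H_g=-(H_fk_z)\otimes(H^*_gk_{\bz})$. Your careful passage from Lemma \ref{ID}(3) to $H_{\tilde{\phi_z}}=H^*_{\bar{\phi_z}}=-k_z\otimes k_{\bz}$ is accurate and in fact supplies a bookkeeping detail that the paper leaves implicit in its bare citation of Lemma \ref{ID}.
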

\begin{proof}
Since $\phi_z\in H^\infty$, by Proposition \ref{prop},
$$
T_gT_{\phi z}=T_{\phi_z}T_g+H_{\tilde{\phi_z}}H_g,
$$
and $$T_{\tilde{\phi_z}}H_g=H_gT_{\phi_z}.$$
Thus, \begin{align*}
KT_{\phi_z}&=H_fT_gT_{\phi_z}=H_fT_{\phi_z}T_g+H_fH_{\tilde{\phi_z}}H_g\\
&=T_{\tilde{\phi_z}}H_fT_g+H_fH_{\tilde{\phi_z}}H_g\\
&=T_{\tilde{\phi_z}}K-(H_fk_z)\otimes(H^*_gk_{\bz}).
\end{align*}
The last equality follows from Lemma \ref{ID}.
\end{proof}

Now we are ready to prove Theorem \ref{M}.\\
{\bf Proof of Theorem \ref{M}.} Necessity: Suppose K is compact. By Lemma \ref{ML}, we have:
$$
\lim_{|z|\to 1^-}||H_fT_g-T_{\tilde{\phi_z}}H_fT_gT_{\bar\phi_z}||=0.
$$
By Lemma \ref{ML2}, we have
$$
||H_fT_g-T_{\tilde{\phi_z}}H_fT_gT_{\bar\phi_z}||=||(H_fT_gk_z)\otimes k_z-(H_fk_z)\otimes(T_{\phi_z}H^*_gk_{\bz})||
$$
Since $k_z\to 0$ weakly as $|z|\to 1$ and $H_fT_g$ is compact,
\beq\label{pf1}
||H_fT_gk_z||\to 0.
\eeq
So $$\lim_{|z|\to 1^-}||(H_fk_z)\otimes(T_{\phi_z}H^*_gk_{\bz})||=0.$$
Since
\begin{align*}
&||(H_fk_z)\otimes(H^*_gk_{\bz})||=||((H_fk_z)\otimes(T_{\phi_z}H^*_gk_{\bz}))T_{\phi_z}||\\
\leq&||(H_fk_z)\otimes(T_{\phi_z}H^*_gk_{\bz})||\cdot||T_{\phi_z}||\leq||(H_fk_z)\otimes(T_{\phi_z}H^*_gk_{\bz})||,
\end{align*}
we get $$\lim_{|z|\to 1^-}||(H_fk_z)\otimes(H^*_gk_{\bz})||=0.$$
By Lemma \ref{*}, $$\lim_{|z|\to 1^-}||H_fk_z||\cdot||H_gk_z||=0.$$
Let $m\in M(H^\infty+C)$ and let $S$ be the support set of $m$. By the Corona Theorem, there is a net $z$ converging to m, and
$$\lim_{z\to m}||H_fk_z||\cdot||H_gk_z||=0.$$
Thus, either $$\varliminf_{z\to m}||H_fk_z||=0$$ or \beq\label{pf2} \varliminf_{z\to m}||H_gk_z||=0.\eeq
By Lemma \ref{lim}, we have $f|_S\in H^\infty|_S$ or $g|_S\in H^\infty|_S$. In the second case,we have
\begin{align*}
&\varliminf_{z\to m}||H_{fg}k_z||=\varliminf_{z\to m}||H_fT_gk_z+T_{\tif} H_gk_z||\\
\leq&\varliminf_{z\to m}||H_fT_gk_z||+||T_{\tif}||\cdot\varliminf_{z\to m}||H_gk_z||=0.
\end{align*}
The first inequality comes from Proposition \ref{prop} and the last equality follows from \eqref{pf1} and \eqref{pf2}.

Therefore, Lemma \ref{lim} implies $(fg)|_S\in H^\infty|_S$.

Sufficiency: By Corollary \ref{cor}, we need to show: for any $m\in M(H^\infty+C)$, \beq\label{pf3}\lim_{z\to m}||K^*K-T^*_{\phi_z}K^*KT_{\phi_z}||=0.\eeq
Let $F_z=-(H_fk_z)\otimes(H^*_gk_{\bz})$. Lemma \ref{ML3} gives $$KT_{\phi_z}=T_{\tilde{\phi_z}}K+F_z.$$
Then
\begin{align}\label{pf4}
\nnb T^*_{\phi_z}K^*KT_{\phi_z}&=(KT_{\phi_z})^*(KT_{\phi_z})=K^*T^*_{\tilde{\phi_z}}T_{\tilde{\phi_z}}K+(K^*T^*_{\tilde{\phi_z}})F_z+F^*_z(T_{\tilde{\phi_z}}K)+F^*_zF_z\\
&=K^*K+(K^*k_{\bz})\otimes(K^*k_{\bz})+(K^*T^*_{\tilde{\phi_z}})F_z+F^*_z(T_{\tilde{\phi_z}}K)+F^*_zF_z.
\end{align}
The last equality comes from Lemma \ref{ID} (2).

Let $S$ be the support set of $m$. If Condition (1) holds, i.e., $f|_S\in H^\infty|_S$, Lemma \ref{lim} and Lemma \ref{*} give
$$\lim_{z\to m}||H_fk_z||=0,$$ and $$\lim_{z\to m}||H^*_fk_{\bz}||=0.$$
So \beq\label{pf5}\lim_{z\to m}||F_z||=0,\eeq and $$\lim_{z\to m}||K^*k_{\bz}||=\lim_{z\to m}||T^*_gH^*_fk_{\bz}||=0.$$
Since $||K||<\infty$ and $\sup\limits_{z\in\DD}||F_z||<\infty$, \eqref{pf4} implies \eqref{pf3}.\\
If Condition (2) holds, i.e., $g|_S\in H^\infty|_S$ and $(fg)|_S\in H^\infty|_S$, by Lemma \ref{lim}, $$\lim_{z\to m}||H_gk_z||=0,$$ and
\beq\label{pf6}\lim_{z\to m}||H_{fg}k_z||=0.\eeq
So \eqref{pf5} also holds.
By Proposition \ref{prop}, $$(H_fT_g)^*k_{\bz}=H^*_{fg}k_{\bz}-(T_{\tif} H_g)^*k_{\bz}=H^*_{fg}k_{\bz}-H^*_g T_{f^*}k_{\bz}.$$
Using \eqref{pf5} and Lemma \ref{*lim}, we get $$\lim_{z\to m}||K^*k_{\bz}||=0.$$
Thus, \eqref{pf3} holds and $H_fT_g$ is compact. $\Box$
\\

Notice that $(T_fH_g)^*=H_{g^*}T_{\barf}$. Combining Theorem \ref{M} and Theorem \ref{M2}, we get the following characterization of the compactness of the product $T_fH_g$:
\begin{cor}
Let $f,g\in L^\infty$. The following are equivalent:
\begin{enumerate}
\item $T_fH_g$ is compact.\\
\item $H^\infty[g^*] \cap H^\infty[\barf, \barf g^*]\subset H^\infty+C.$\\
\item For each support set $S$, one of the following holds:
\begin{enumerate}
\item $g^*|_S\in H^\infty|_S.$\\
\item $\barf|_S\in H^\infty|_S$ and $(\barf g^*)|_S\in H^\infty|_S.$
\end{enumerate}
\end{enumerate}
\end{cor}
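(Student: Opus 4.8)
The plan is to reduce everything to the already-proved Theorems \ref{M} and \ref{M2} by passing to the adjoint. The starting point is the identity $(T_fH_g)^*=H_{g^*}T_{\barf}$, noted just before the statement, which follows from $T_f^*=T_{\barf}$ together with $H_g^*=H_{g^*}$ (the latter being the relation $H_f^*=H_{f^*}$ recorded in the introduction). Since a bounded operator on $H^2$ is compact if and only if its adjoint is compact, the product $T_fH_g$ is compact precisely when $H_{g^*}T_{\barf}$ is compact. This last operator is exactly a Hankel--Toeplitz product of the form treated in Theorem \ref{M}, with the Hankel symbol $g^*$ playing the role of $f$ and the Toeplitz symbol $\barf$ playing the role of $g$.

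For the equivalence of (1) and (3), I would apply Theorem \ref{M} directly to $H_{g^*}T_{\barf}$. The theorem then asserts that this product is compact if and only if, for each support set $S$, either $g^*|_S\in H^\infty|_S$, or else both $\barf|_S\in H^\infty|_S$ and $(g^*\barf)|_S\in H^\infty|_S$. Because pointwise multiplication is commutative, $g^*\barf=\barf g^*$ as elements of $L^\infty$, so these two alternatives are exactly conditions (3)(a) and (3)(b) in the statement. Combining this with the adjoint reduction of the previous paragraph yields $(1)\iff(3)$.

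For the equivalence of (2) and (3), I would invoke Theorem \ref{M2} under the substitution $f\mapsto g^*$, $g\mapsto\barf$. That theorem gives that $H^\infty[g^*]\cap H^\infty[\barf,g^*\barf]\subset H^\infty+C$ holds if and only if, for each support set $S$, the same dichotomy (either $g^*|_S\in H^\infty|_S$, or both $\barf|_S\in H^\infty|_S$ and $(g^*\barf)|_S\in H^\infty|_S$) is satisfied. Since $g^*\barf=\barf g^*$ gives $H^\infty[\barf,g^*\barf]=H^\infty[\barf,\barf g^*]$, the algebraic condition produced here is precisely (2). Hence $(2)\iff(3)$, and together with the first equivalence all three statements are equivalent.

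There is essentially no genuine obstacle in this argument; it is pure bookkeeping resting on the two main theorems. The only point demanding care is the symbol dictionary: one must keep straight that under the adjoint it is $g^*$ (not $g$) and $\barf$ (not $f$) that appear, and verify the harmless but essential commutativity $g^*\barf=\barf g^*$ so that the support-set conditions and the generated algebras match those displayed in (2) and (3).
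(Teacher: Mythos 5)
Your proposal is correct and matches the paper's argument exactly: the paper likewise passes to the adjoint via $(T_fH_g)^*=H_{g^*}T_{\bar f}$ and then applies Theorems \ref{M} and \ref{M2} with the symbols $g^*$ and $\bar f$ in place of $f$ and $g$. The only difference is that you spell out the bookkeeping (commutativity $g^*\bar f=\bar f g^*$ and compactness under adjoints) that the paper leaves implicit.
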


\section{A Generalization}\label{sec}
In this section, we prove a generalization of Theorem \ref{M} that characterizes the compactness of the sum of two products of Hankel and Toeplitz operators.
\begin{thm}
Suppose $f_1,f_2,g_1,g_2$ are in $L^\infty$. Then $K=H_{f_1}T_{g_1}+H_{f_2}T_{g_2}$ is compact if and only if for each support set $S$, one of the following holds:
\begin{enumerate}
\item $f_1|_S$, and $f_2|_S$ are in $H^\infty|_S$.
\item $f_1|_S$, $g_2|_S$, and $(f_2g_2)|_S$ are in $H^\infty|_S$.
\item $g_1|_S$, $(f_1g_1)|_S$ and $f_2|_S$ are in $H^\infty|_S$.
\item $g_1|_S$, $g_2|_S$ $(f_1g_1)|_S$ and $(f_2g_2)|_S$ are in $H^\infty|_S$.
\item There exists nonzero constant $c$ such that $(cf_1+f_2)|_S, (g_1-cg_2)|_S$, and $[f_1(g_1-cg_2)]|_S$ are all in $H^\infty|_S$.
\end{enumerate}
\end{thm}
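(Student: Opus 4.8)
The plan is to prove both implications through the compactness criterion of Corollary \ref{cor}, exactly as in the proof of Theorem \ref{M}, after recording the obvious additive versions of Lemmas \ref{ML2} and \ref{ML3}. Summing Lemma \ref{ML3} over the two terms gives $KT_{\phi_z}=T_{\tilde{\phi_z}}K+F_z$ with
\[
F_z=-(H_{f_1}k_z)\otimes(H^*_{g_1}k_{\bz})-(H_{f_2}k_z)\otimes(H^*_{g_2}k_{\bz}),
\]
and, expanding $H^*_{f_i}H_{f_j}$ by Proposition \ref{prop}(1), $K^*K$ is again a finite sum of finite products of Toeplitz operators with zero symbol; so the reasoning preceding Corollary \ref{cor} applies verbatim and $K$ is compact iff $\lim_{z\to m}\|K^*K-T^*_{\phi_z}K^*KT_{\phi_z}\|=0$ for every $m\in M(H^\infty+C)$. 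As in \eqref{pf4}, this reduces to proving $\lim_{z\to m}\|F_z\|=0$ and $\lim_{z\to m}\|K^*k_{\bz}\|=0$.

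For sufficiency I fix a support set $S$ and observe that each factor $H_{f_i}T_{g_i}$ is governed by the two alternatives of Theorem \ref{M}. Conditions (1)--(4) are precisely the four ways of assigning to each factor alternative (1) (``$f_i|_S\in H^\infty|_S$'') or alternative (2) (``$g_i|_S,(f_ig_i)|_S\in H^\infty|_S$''); for each factor the estimates of the sufficiency part of Theorem \ref{M} give $\lim_{z\to m}\|(H_{f_i}k_z)\otimes(H^*_{g_i}k_{\bz})\|=0$ and $\lim_{z\to m}\|(H_{f_i}T_{g_i})^*k_{\bz}\|=0$, and summing yields the two required limits. For the genuinely new condition (5) the key device is the exact identity
\[
K=H_{f_1}T_{g_1-cg_2}+H_{cf_1+f_2}T_{g_2},
\]
valid for every constant $c$: under (5) the first factor satisfies alternative (2) (with symbols $f_1,\ g_1-cg_2$) and the second satisfies alternative (1) (since $(cf_1+f_2)|_S\in H^\infty|_S$), so the same summation applies.

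For necessity, Lemma \ref{ML} together with the summed Lemma \ref{ML2}, after right multiplication by $T_{\phi_z}$, yields the two basic facts
\[
\lim_{|z|\to1^-}\big\|(H_{f_1}k_z)\otimes(H^*_{g_1}k_{\bz})+(H_{f_2}k_z)\otimes(H^*_{g_2}k_{\bz})\big\|=0,\qquad \lim_{|z|\to1^-}\|Kk_z\|=0.
\]
Writing $u_i=H_{f_i}k_z$, $v_i=H^*_{g_i}k_{\bz}$ (so $\|v_i\|=\|H_{g_i}k_z\|$ by Lemma \ref{*}) and computing the Hilbert--Schmidt norm of the rank-two operator, the first limit is equivalent to
\[
\|u_1\|^2\|v_1\|^2+\|u_2\|^2\|v_2\|^2+2\,\mathrm{Re}\big(\langle u_1,u_2\rangle\overline{\langle v_1,v_2\rangle}\big)\to0 .
\]
Fix $S$ and pass to a net $z\to m$ on which all four norms and both inner products converge, reading off analyticity on $S$ through Lemma \ref{lim}. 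If $f_1|_S\in H^\infty|_S$ then $\|H_{f_1}T_{g_1}k_z\|\to0$ by Lemma \ref{*lim}, hence $\|H_{f_2}T_{g_2}k_z\|\to0$; if also $f_2|_S\in H^\infty|_S$ we are in (1), while if $f_2|_S\notin H^\infty|_S$ the rank-two limit forces $\|H_{g_2}k_z\|\to0$ and then $H_{f_2}T_{g_2}=H_{f_2g_2}-T_{\widetilde{f_2}}H_{g_2}$ gives $(f_2g_2)|_S\in H^\infty|_S$, i.e.\ (2); symmetrically $f_2|_S\in H^\infty|_S$ gives (3).

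The remaining case $f_1|_S,f_2|_S\notin H^\infty|_S$ (so $\|u_1\|,\|u_2\|$ bounded below) is where the work lies. Cauchy--Schwarz applied to the displayed quadratic forces $\|u_1\|^2\|v_1\|^2=\|u_2\|^2\|v_2\|^2$ and, when this common value is positive, full equality in both Cauchy--Schwarz inequalities together with $\langle u_1,u_2\rangle\overline{\langle v_1,v_2\rangle}<0$. Hence either ($\alpha$) $\|v_1\|,\|v_2\|\to0$, i.e.\ $g_1|_S,g_2|_S\in H^\infty|_S$; or ($\beta$) the $u_i$ become asymptotically parallel and the $v_i$ become asymptotically parallel. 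In case ($\beta$) the parallelism of the $u_i$ produces a nonzero constant $c$ with $\|H_{cf_1+f_2}k_z\|\to0$, and the phase identity $\langle u_1,u_2\rangle\overline{\langle v_1,v_2\rangle}=-\|u_1\|^2\|v_1\|^2$ (which follows from the equalities above) makes the limiting proportionality constants of the $u$'s and $v$'s reciprocal, so the \emph{same} $c$ satisfies $\|H_{g_1-cg_2}k_z\|\to0$; feeding this back through $\|Kk_z\|\to0$ and the identity $K=H_{f_1}T_{g_1-cg_2}+H_{cf_1+f_2}T_{g_2}$ gives $[f_1(g_1-cg_2)]|_S\in H^\infty|_S$, which is (5).

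I expect case ($\alpha$) to be the main obstacle. There the rank-two limit is vacuous and all that survives from compactness is $g_1|_S,g_2|_S\in H^\infty|_S$ and $(f_1g_1+f_2g_2)|_S\in H^\infty|_S$ (from $\|Kk_z\|\to0$ and $H_{f_i}T_{g_i}=H_{f_ig_i}-T_{\widetilde{f_i}}H_{g_i}$); indeed one checks that the criterion of Corollary \ref{cor} is \emph{automatically} satisfied at such an $m$, so the required conclusion is purely algebraic. Concretely, I must show: if $g_1|_S,g_2|_S,(f_1g_1+f_2g_2)|_S\in H^\infty|_S$, then either $(f_1g_1)|_S,(f_2g_2)|_S\in H^\infty|_S$ (giving (4)), or some nonzero $c$ has $(cf_1+f_2)|_S\in H^\infty|_S$, whence $(cf_1+f_2)g_2\in H^\infty|_S$ combined with $(f_1g_1+f_2g_2)\in H^\infty|_S$ yields $[f_1(g_1-cg_2)]|_S\in H^\infty|_S$ and hence (5). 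Equivalently, in the quotient $L^\infty|_S/H^\infty|_S$, if the classes of $f_1,f_2$ are linearly independent and $g_1[f_1]=-g_2[f_2]$, then $g_1[f_1]=0$. Proving this dichotomy is the crux; I would attack it through inner--outer factorization on the support set, treating $H^\infty|_S$ as a weak-$*$ Dirichlet algebra, dividing out the greatest common inner factor of $g_1,g_2$ to reduce to the coprime case, in which coprimeness should force each $f_ig_i$ into $H^\infty|_S$ unless the two classes collapse onto a single line.
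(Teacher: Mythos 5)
Your framework (the reduction via Corollary \ref{cor}, the perturbation term $F_z$, and the two target limits $\|F_z\|\to0$, $\|K^*k_{\bar z}\|\to0$) coincides with the paper's, your sufficiency argument for conditions (1)--(5) is the paper's (including the key regrouping $K=H_{f_1}T_{g_1-cg_2}+H_{cf_1+f_2}T_{g_2}$ for condition (5)), and your case ($\beta$) is in substance the paper's CASE 3 --- the paper extracts the constant more cleanly, by applying the adjoint of the rank-two operator to the vector $H_{f_1}k_z$ to get $\|H^*_{g_1}k_{\bar z}+t_zH^*_{g_2}k_{\bar z}\|\to0$ with $t_z$ bounded and then passing to a limit $t$ of a subnet, rather than through equality analysis in Cauchy--Schwarz, but the conclusion and the subsequent use of $\|Kk_z\|\to0$ are the same. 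The genuine gap is exactly where you put it: case ($\alpha$). There compactness yields only $g_1|_S,g_2|_S,(f_1g_1+f_2g_2)|_S\in H^\infty|_S$, and the algebraic dichotomy you propose to prove (in your formulation: independent classes $[f_1],[f_2]$ with $g_1[f_1]=-g_2[f_2]$ force $g_1[f_1]=0$) is in fact \emph{false}, so the factorization attack you sketch cannot succeed.

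Here is a concrete counterexample. Let $b$ be an infinite Blaschke product, let $m\in M(H^\infty+C)$ be a cluster point of its zero sequence, so $m(b)=0$, and let $S$ be the support set of $m$. Take $f_1=\bar b^2$, $g_1=b$, $f_2=-\bar b$, $g_2=1$. By Proposition \ref{prop}(2) and $H_b=0$, $K=H_{\bar b^2}T_b-H_{\bar b}=H_{\bar b^2b}-H_{\bar b}=0$, which is compact (indeed $f_1g_1+f_2g_2=0$). Now use that $\mu_m$ is multiplicative on $H^\infty$ and $(b\bar b)|_S=1$: if $\bar b|_S=h|_S$ with $h\in H^\infty$ then $1=m(bh)=m(b)m(h)=0$; if $\bar b^2|_S=h|_S$ then $1=m(b)^2m(h)=0$; and if $(c\bar b^2-\bar b)|_S=h|_S$ then $(c-b)|_S=(b^2h)|_S$, so $c=m(b)^2m(h)=0$. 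Hence $f_1|_S,f_2|_S,(f_1g_1)|_S=\bar b|_S\notin H^\infty|_S$ and $(cf_1+f_2)|_S\notin H^\infty|_S$ for every $c\ne0$ (the index-swapped variant of (5) fails by the same computation), so none of (1)--(5) holds at $S$ although $K=0$. So the necessity direction of the stated theorem is itself false, and --- this is worth knowing --- the paper's own proof slips at precisely your case ($\alpha$): in its CASE 2 it asserts that $\varliminf_{z\to m}\|(H_{f_2}k_z)\otimes(H^*_{g_2}k_{\bar z})\|=0$ gives ``\eqref{C1} or \eqref{C2}'' by the proof of Theorem \ref{M}, but the clause $(f_2g_2)|_S\in H^\infty|_S$ in \eqref{C2} was obtained there from compactness of the \emph{single} product $H_{f_2}T_{g_2}$, which is unavailable for the sum; the example above witnesses the failure. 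The repair is to add to the list the condition that $g_1|_S$, $g_2|_S$ and $(f_1g_1+f_2g_2)|_S$ lie in $H^\infty|_S$: your own observation that the criterion of Corollary \ref{cor} is automatically satisfied in case ($\alpha$) is precisely the sufficiency proof for this added condition, and with it both your case ($\alpha$) and the paper's CASE 2 close immediately.
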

\begin{proof}
Necessity: Suppose $K$ is compact. By Lemma \ref{ML} and Lemma \ref{ML2}, as in the proof of Theorem \ref{M}, we have
$$
\lim_{|z|\to 1^-}||(H_{f_1}k_z)\otimes(H^*_{g_1}k_{\bz})+(H_{f_2}k_z)\otimes(H^*_{g_2}k_{\bz})||=0.
$$

Let $m\in M(H^\infty+C)$ and let $S$ be the support set of m. By the Corona Theorem, there exist a net $z\to m$ such that
\beq\label{G1}
\lim_{z\to m}||(H_{f_1}k_z)\otimes(H^*_{g_1}k_{\bz})+(H_{f_2}k_z)\otimes(H^*_{g_2}k_{\bz})||=0.
\eeq

Now we consider three cases.

CASE 1: $$\varliminf_{z\to m}||H_{f_1}k_z||=0~\m{or}\varliminf_{z\to m}||H_{f_2}k_z||=0.$$
By symmetry, we only need to consider $$\varliminf_{z\to m}||H_{f_1}k_z||=0.$$
Lemma \ref{lim} gives $f_1|_S\in H^\infty|_S$, and \eqref{G1} implies $$\varliminf_{z\to m}||(H_{f_2}k_z)\otimes(H^*_{g_2}k_{\bz})||=0.$$
By the proof of Theorem \ref{M},  Condition (1) or (2) holds.

CASE 2: $$\varliminf_{z\to m}||H_{g_1}k_z||=0~\mbox{or}\varliminf_{z\to m}||H_{g_2}k_z||=0.$$
Again, by symmetry, we only consider the case \beq\label{g2}\varliminf_{z\to m}||H_{g_1}k_z||=0.\eeq
Lemma \ref{lim} gives $g_1|_S\in H^\infty|_S$, and \eqref{G1} implies $$\varliminf_{z\to m}||(H_{f_2}k_z)\otimes(H^*_{g_2}k_{\bz})||=0.$$
From the proof of Theorem \ref{M},
\beq\label{C1} f_2|_S\in H^\infty|_S\eeq or \beq\label{C2} g_2|_S\in H^\infty|_S~\m{and}~(f_2g_2)|_S\in H^\infty|_S.\eeq
If \eqref{C1} holds, then by the discussion in CASE 1, Condition (1) or (3) holds.\\
If \eqref{C2} holds, by Lemma \ref{lim}, we have
\beq\label{d1}
\lim_{z\to m}||H_{g_2}k_z||=0
\eeq and
\beq\label{d2}
\lim_{z\to m}||H_{f_2g_2}k_z||=0.
\eeq
Using the identity $$H_{f_1}T_{g_1}+H_{f_2}T_{g_2}=H_{f_1g_1}-T_{\tilde{f_1}}H_{g_1}+H_{f_2g_2}-T_{\tilde{f_2}}H_{g_2}$$
we get $$\varliminf_{z\to m}||H_{f_1g_1}k_z||=\varliminf_{z\to m}||Kk_z+T_{\tilde{f_1}}H_{g_1}k_z-H_{f_2g_2}k_z+T_{\tilde{f_2}}H_{g_2}k_z||=0.$$
The last equality comes from \eqref{g2}, \eqref{d1}, \eqref{d2} and the compactness of $K$. Thus by Lemma \ref{lim} $$(f_1g_1)|_S\in H^\infty|_S$$ and thus Condition (4) holds.

CASE 3: None of the above cases happen, so $$\varliminf_{z\to m}||H_{f_1}k_z||\geq \Gd>0.$$
Applying the operator $[(H_{f_1}k_z)\otimes(H^*_{g_1}k_{\bz})+(H_{f_2}k_z)\otimes(H^*_{g_2}k_{\bz})]^*$ to $H_{f_1}k_z$ we get,
$$
\lim_{z\to m}||~||H_{f_1}k_z||^2\cdot H^*_{g_1}k_{\bz}+\langle H_{f_1}k_z, H_{f_2}k_z\rangle \cdot H^*_{g_2}k_{\bz}~||=0.
$$
Let $$t_z={\langle H_{f_1}k_z, H_{f_2}k_z\rangle}/{||H_{f_1}k_z||^2}.$$ Then $|t_z|\leq {1\over \Gd}$, and we may assume $t_z\to t$ for some constant $t$.
Thus
\begin{align*}
0&=\lim_{z\to m}||H^*_{g_1}k_{\bz}+t_z H^*_{g_2}k_{\bz}||=\lim_{z\to m}||H^*_{g_1}k_{\bz}+ t H^*_{g_2}k_{\bz}||\\
&=\lim_{z\to m}||H^*_{g_1+{\bt}g_2}k_{\bz}||=\lim_{z\to m}||H_{g_1+{\bt}g_2}k_{z}||.
\end{align*}
If $t=0$, then we go back to CASE 2. Now assume $t\neq 0$, let $c=-\bt$, then
\beq\label{C3}
\lim_{z\to m}||H_{g_1-cg_2}k_{z}||=0.
\eeq
By Lemma \ref{lim}, we obtain $$(g_1-cg_2)|_S\in H^\infty|_S.$$
Notice that
\begin{align}\label{id1}
 &(H_{f_1}k_z)\otimes(H^*_{g_1}k_{\bz})+(H_{f_2}k_z)\otimes(H^*_{g_2}k_{\bz})\\
\nnb=&(H_{f_1}k_z)\otimes(H^*_{g_1-cg_2}k_{\bz})+(H_{cf_1+f_2}k_z)\otimes(H^*_{g_2}k_{\bz})
\end{align}
Because \eqref{G1} and \eqref{C3}, we get
$$
\lim_{z\to m}||(H_{cf_1+f_2}k_z)\otimes(H^*_{g_2}k_{\bz})||=0.
$$
Since we assumed in this case $$\varliminf_{z\to m}||H^*_{g_2}k_{\bz}||=\varliminf_{z\to m}||H_{g_2}k_z||>0,$$
we have $$\varliminf_{z\to m}||H_{cf_1+f_2}k_z||=0.$$
Thus Lemma \ref{lim} and Lemma \ref{*lim} imply that $$(cf_1+f_2)|_S\in H^\infty|_S$$ and
\beq\label{C4}\lim_{z\to m}||H_{cf_1+f_2}T_{g_2}k_z||=0.\eeq
Then we use the identity
\begin{align}\label{id2}
K=&H_{f_1}T_{g_1}+H_{f_2}T_{g_2}=H_{f_1}T_{g_1-cg_2}+H_{cf_1+f_2}T_{g_2}\\
\nnb=&H_{f_1(g_1-cg_2)}-T_{\tilde{f_1}}H_{g_1-cg_2}+H_{cf_1+f_2}T_{g_2}.
\end{align}
By \eqref{C3}, \eqref{C4} and the compactness of $K$, we have $$\lim_{z\to m}||H_{f_1(g_1-cg_2)}k_z||=0.$$
Therefore, Lemma \ref{lim} implies $(f_1(g_1-cg_2))|_S\in H^\infty|_S$, which gives Condition (5).

Sufficiency: To prove the converse, we will use Corollary \ref{cor} as in the proof of Theorem \ref{M}. It suffices to show: for any $m\in M(H^\infty+C)$,  \beq\label{G2}\lim_{z\to m}||K^*K-T^*_{\phi_z}K^*KT_{\phi_z}||=0.\eeq
By Lemma \ref{ML3},
$$
KT_{\phi_z}=T_{\tilde{\phi_z}}K-(H_{f_1}k_z)\otimes(H^*_{g_1}k_{\bz})-(H_{f_2}k_z)\otimes(H^*_{g_2}k_{\bz}).
$$
Denote
$$
F_z=-(H_{f_1}k_z)\otimes(H^*_{g_1}k_{\bz})-(H_{f_2}k_z)\otimes(H^*_{g_2}k_{\bz}).
$$
Similarly to the proof of Theorem \ref{M}, we also have $$KT_{\phi_z}=T_{\tilde{\phi_z}}K+F_z,$$ thus
\beq T^*_{\phi_z}K^*KT_{\phi_z}=K^*K+(K^*k_{\bz})\otimes(K^*k_{\bz})+(K^*T^*_{\tilde{\phi_z}})F_z+F^*_z(T_{\tilde{\phi_z}}K)+F^*_zF_z.
\eeq
We need to show
\beq\label{S1}\lim_{z\to m}||F_z||=0\eeq and \beq\label{S2}\lim_{z\to m}||K^*k_{\bz}||=0.\eeq
From the proof of Theorem \ref{M}, if one of the the Conditions (1),(2),(3),(4) holds, we can get \eqref{S1} and \eqref{S2}.

Now suppose Condition (5) holds. By Lemma \ref{lim}, we have
\beq\label{51}
\lim_{z\to m}||H_{g_1-cg_2}k_{z}||=0,
\eeq
\beq\label{52}
\lim_{z\to m}||H_{cf_1+f_2}k_z||=0,
\eeq
\beq\label{53}
\lim_{z\to m}||H_{f_1(g_1-cg_2)}k_z||=0.
\eeq
By \eqref{id1} and Lemma \ref{*}, we have
\begin{align*}
||F_z||&=||(H_{f_1}k_z)\otimes(H^*_{g_1-cg_2}k_{\bz})+(H_{cf_1+f_2}k_z)\otimes(H^*_{g_2}k_{\bz})||\\
&\leq ||H_{f_1}k_z||\cdot||H^*_{g_1-cg_2}k_{\bz}||+||H_{cf_1+f_2}k_z||\cdot||H^*_{g_2}k_{\bz}||\to0.
\end{align*}
as $z\to m$.
Also \eqref{id2} implies
\begin{align*}
||K^*k_{\bz}||&=||H^*_{f_1(g_1-cg_2)}k_{\bz}-H^*_{g_1-cg_2}T^*_{\tilde{f_1}}k_{\bz}+T^*_{g_2}H^*_{cf_1+f_2}k_{\bz}||\\
&\leq ||H^*_{f_1(g_1-cg_2)}k_{\bz}||+||H^*_{g_1-cg_2}T_{f^*_1}k_{\bz}||+||T^*_{g_2}||\cdot||H^*_{cf_1+f_2}k_{\bz}||
\end{align*}
Then \eqref{52},\eqref{53} and Lemma \ref{*} imply $$\lim_{z\to m}||H^*_{cf_1+f_2}k_{\bz}||=0$$ and $$\lim_{z\to m}||H^*_{f_1(g_1-cg_2)}k_{\bz}||=0.$$
Lemma \ref{*lim} and \eqref{51} give $$\lim_{z\to m}||H^*_{g_1-cg_2}T_{f^*_1}k_{\bz}||=0.$$
Thus, \eqref{S2} holds.
This completes the proof.
\end{proof}

\bibliography{references}

\end{document}